\newtheorem{thm}{Theorem}
\newtheorem{defi}{Definition}
\newtheorem{lem}{Lemma}
\newtheorem{rema}{Remark}
\def\cal{\mathcal}
\def\di{\displaystyle}
 \newcommand{\N}{\mathbb{N}}
 \newcommand{\R}{\mathbb{R}}
\def \rr        {{\mathbb R}}
\renewcommand \bar[1]   {\overline{#1}}
\def \oo    {\Omega}           
\def \bo    {\partial\Omega}   
\def\cal{\mathcal}
\def\di{\displaystyle}
\def\div{\mbox{\rm div}}
\newcommand{\dpar}[2]{\frac{\partial #1}{\partial #2}}
\newcommand{\mc}[1]{\mathcal{#1}}
\newcommand{\Dp}[0]{\, \mathcal{D}^{\alpha}_+ \,}
\newcommand{\Dm}[0]{\, \mathcal{D}^{\alpha}_- \,}
\newcommand{\Dcp}[0]{\, {}^c \mathcal{D}^{\alpha}_+ \,}
\newcommand{\Dcm}[0]{\, {}^c \mathcal{D}^{\alpha}_- \,}
\newcommand{\Da}[0]{\, {}^c \mathcal{D}^{\alpha} }
\newcommand{\Dpv}[1]{\, \mathcal{D}^{#1}_+ \,}
\newcommand{\Dmv}[1]{\, \mathcal{D}^{#1}_- \,}
\newcommand{\Dcpv}[1]{\, {}^c \mathcal{D}^{#1}_+ \,}
\newcommand{\Ipv}[1]{\, \mathcal{I}^{#1}_+ \,}
\newcommand{\Imv}[1]{\, \mathcal{I}^{#1}_- \,}
\newcommand{\divg}{\text{\upshape div}}
\begin{document}

\title{Lagrangian for the convection-diffusion equation}
\author{Jacky Cresson}
\author{Isabelle Greff}
\author{Pierre Inizan}

\maketitle

\pagestyle{myheadings}
\markboth{Lagrangian for the convection-diffusion equation}
{J. Cresson, I. Greff, P. Inizan}

\begin{abstract}
Using the asymmetric fractional calculus of variations, 
we derive a fractional 
Lagrangian variational formulation of the convection-diffusion 
equation in the special case of constant coefficients.  
\end{abstract}


\begin{tiny}
\tableofcontents
\end{tiny}

\part{Introduction}
\label{part1}
\section{Introduction}
The convection-diffusion equation occurs in many physical problems such as porous media, engineering, geophysics. It could model the dispersion of a pollutant in a river estuary, or groundwater transport, atmospheric pollution, concentration of electron inducing an electric current, heat transfer in a heated body. 
As many PDEs, the solution exists under conditions, but is often not known explicitly. Even for linear convection-diffusion equation numerical schemes are not always well understood. 
It is still a challenging problem 
to obtain efficient and robust numerical schemes
to solve the 
convection-diffusion equation due in particular to the mixing 
between two different types
of behavior, namely the convective and diffusive regimes.
Let us first recall the equation. Let $\oo\subset \rr^d$ be an open subset with a Lipschitz-continuous boundary $\bo$. The time domain $[a,b]$, $0\leq a\leq b$ is arbitrary, but fixed. Let us consider the general linear parabolic equation of second order: 
\begin{equation}
\label{CDunst}
\begin{array}{lll}
u_t + (\gamma \cdot \nabla) u -\div(K \cdot \nabla u)+ \beta u = f(t,x)\,
\quad \text{ in } (a,b]\times\Omega   ,\\
u(t,x) = 0  \quad \text{ in } (a,b] \times\partial\Omega   ,\\
u(a,x) = u_0(x) \quad \text{ in } \Omega\,,
\end{array}
\end{equation}
where  $\gamma \in \R^d,K \in \R^{d\times d}, \beta\in \R$.

As an example $u$ is the concentration of a pollutant, transported by a flow of velocity $\gamma \in \R^d$. The tensor $K$ represents the
diffusivity of the pollutant specie. The creation or destruction of the specie can be taken into account via $\beta u$, and $f$ is the source term.
The unknown $u$ is both depending on time and space. \\ 
We assume the coefficients are smooth, bounded and satisfying the following properties:
\begin{itemize}
\item $K$ is symmetric, uniformly positive definite s.t. 
\begin{equation*}
K\in \mathcal{C}^0(a,b;L^{\infty}(\oo)^{d\times d})\quad 
{\text and }\quad 
\exists \lambda_1, \lambda_2 >0: \lambda_1 \vert \xi\vert^2 < \xi K\xi^T
< \lambda_2 \vert \xi\vert^2 
\end{equation*}
\item
the convection $\gamma$ is such that
\begin{equation*}
\gamma\in \mathcal{C}^0(a,b;W^{1,\infty}(\oo)^{d}) \quad {\text and } 
\quad \div \gamma = 0 
\end{equation*}

\item the reaction $\beta$ is non-negative 
\begin{equation*}
\beta\in \mathcal{C}^0(a,b;L^{\infty}(\oo)) \quad {\text and } 
\quad \exists \beta_0 \quad
\beta\geq \beta_0\,.
\end{equation*}
\end{itemize}
These assumptions guarantee that the problem \eqref{CDunst} is well 
posed for $f\in L^2(a,b;H^{-1}(\Omega))$, and every $u_0 \in L^2(\Omega)$.\\
Let us notice that in the special case of absence of convection, 
when $\gamma=0$, the stationary convection-diffusion equation is 
simply the Poisson equation. It is well known that the Poisson 
equation derives from a variational principle also called 
least-action principle. This means that the solution $u$ of 
the Poisson equation is a minimizer of the following Lagrangian functional
\begin{eqnarray*}
\label{L}
\cal{L} & : &  H_0^1(\oo) \longrightarrow \R \\
& & v\mapsto 
\cal{L}(v) = \int_{\oo} \frac{1}{2} (K \cdot \nabla  v) \cdot \nabla  v \,dx 
- \int_{\oo} fv\,dx\,.
\end{eqnarray*}
This is not the case of the convection-diffusion equation. 
For instance, the stationary convection-diffusion equation 
admits the following weak formulation:
\begin{equation}
\label{eq:weakCD}
\int_{\oo}  (K \cdot \nabla  v) \cdot \nabla   \phi \,dx + \int_{\oo} (\gamma \cdot \nabla) u \,\phi + \beta u \phi \,dx = \int_{\oo} f\phi\,dx\,,\quad \text{ for any } \phi \in 
H_0^1(\oo).
\end{equation}
Nevertheless the advective term is not symmetric in $u$ and $\phi$. As a consequence, the weak formulation \eqref{eq:weakCD} does not derive from a potential, \cite{Vainberg}. This can also be seen as the convection-diffusion equation does not satisfy the so-called Helmholtz conditions, i.e. that the Fr\'echet derivative of the Euler-Lagrange expression is not  self-adjoint. We refer to (\cite{Olver}, Thm. 5.92, \-p.364) for more details.

Let us note that there were some attempt to construct variational formulation for the convection-diffusion equation by Ortiz \cite{Ortiz}, where he resorts to a local transformation of the solution by use of a ``dual'' problem.
In this paper, we prove that the solutions of the convection-diffusion equation correspond to critical points of a fractional Lagrangian functional. The idea to use fractional derivatives in order to bypass classical obstruction to the existence of a Lagrangian functional was discussed by Riewe (\cite{Riewe1},\cite{Riewe2}). However, for technical reasons his idea cannot provide a fractional variational formulation of the convection-diffusion equation. This difficulty was solved recently by Cresson and Inizan in \cite{ci}. They introduce the asymmetric fractional calculus of variations, and obtain an explicit fractional Lagrangian.\\

The quest for such a variational formulation is not only an abstract mathematical problem in calculus of variations. Due to the existence of different flow regime which depends on a coefficient called the Reynolds number, many problems arise in the construction of numerical schemes for 
the convection-diffusion equation. 
The main difficulty is to avoid non physical spurious oscillations 
which appear in numerical experiment for high value of 
the Reynolds number. Our idea is to use the previous 
Lagrangian variational formulation in order to build 
a fractional version of a variational integrators.
In the classical case (see \cite{Lubich})
the variational integrator possess good numerical properties.
A first attempt to construct fractional variational integrators
is done in \cite{cbgi}.\\

The outline of the paper is as follow. First we give some notations. 
In Part \ref{part2}, we explain Riewe's approach in the case of dissipative 
equations,
the obstructions to the 
existence of a Lagrangian for the convection-diffusion 
equation and how it is related to the irreversibility of the equation. 
Part \ref{part3} is devoted to recall basics about fractional 
calculus  and introduce the asymmetric fractional principle of
variations. 
Finally in Part \ref{part4}, we give a Lagrangian 
associated to the convection-diffusion equation based on the
fractional derivatives.
\section{Notations and assumptions}
\label{sect:notations}
\subsection{Domain}
Let $d\in \N$. We consider a smooth $d$-dimensional bounded convex domain $\Omega$ with boundary $\partial \Omega$. Let  $(e_1,\ldots,e_d)$ be the canonical basis for $\R^d$. For any $x \in \R^d$, we denote by $x_i$ the $i$-th component of $x$ in the canonical basis of $\R^d$.
Let $1 \leq i \leq d$ and $x \in \overline{\Omega}$. We denote by $\delta_{i,x}$ the straight line of $\R^d$ defined by  
\begin{equation*}
\delta_{i,x} = x + \mbox{\rm Span}(e_i)
\end{equation*} 
and $\Omega_{i,x} = \overline{\Omega} \cap \delta_{i,x}$. 
As $\Omega$ is bounded and convex, $\Omega_{i,x}$ is a segment. 
Then, it exists $a_{i,x} \leq b_{i,x}$ such that 
\begin{equation}
\label{omegaix}
\Omega_{i,x}: =\{ (x_1 ,\dots ,x_{i-1} ,t,x_{i+1} ,\dots ,x_d ) \mid t\in [a_{i,x} ,b_{i,x} ] \} .
\end{equation}
\subsection{Functional sets}
\label{subsection:functional}
For two sets $A$ and $B$, $\mc{F}(A,B)$ denotes the vector space of 
functions $f \, : \, A \rightarrow B$. Let $a,b \in \R$, $a<b$. 
Let $m, n \in \N^*$ and $p, q \in \N$. 
Let $\mc{U}$ be an open subset of $\R^m$ or the finite 
interval $[a,b]$. The vector space of functions 
$\mc{U} \rightarrow \R^n$ of class $C^p$ is denoted 
by $C^p(\mc{U})$. 
For $f\in \mc{F}([a,b]\times\Omega,\R)$,
we denote by:
\begin{eqnarray*}
\begin{array}{ccc}
\forall t\in [a,b]\quad f_t  :& \Omega & \longrightarrow \R\\
& x& \mapsto f(t,x)
\end{array} 
&\quad \text{and}\quad
\begin{array}{ccc}
\forall x\in \Omega\quad
f_x  :&  [a,b]&  \longrightarrow \R\,.\\
& t &\mapsto f(t,x)
\end{array} 
\end{eqnarray*}
Let $C^{p,q} ([a,b]\times\Omega )$ and $C^p ([a,b]\times\Omega )$
the functional spaces defined as follow:
\begin{equation*}
C^{p,q} ([a,b]\times\Omega ): =\{f \in \mc{F}([a,b]\times\Omega,\R)
\vert\,
\forall t \in [a,b], \, f_t \in C^p(\Omega),
\forall  x \in \Omega, \, f_x \in C^q ([a,b]) \},
\end{equation*}
and 
$C^p ([a,b]\times\Omega ):= C^{p,p} ([a,b]\times\Omega )$ when $q=p$.
Let $C^p_0(\Omega): = \{ f \in C^p(\Omega) \; | \; f = 0 \text{ on } \partial \Omega \}$.\\
For $p=0$, we introduce the following vector spaces:
\begin{equation*}
C^0_+([a,b]) := \{ f \in C^0([a,b]) \; | \; f(a)=0 \},
\end{equation*} 
\begin{equation*}
C^0_-([a,b]) := \{ f \in C^0([a,b]) \; | \; f(b)=0 \},
\end{equation*} 
and for $p \geq 1$, let
\begin{equation*}
C^p_+([a,b]) := \{ f \in C^p([a,b]) \; | \; f^{(k)}(a)=0, \; 0 \leq k \leq p-1 \},
\end{equation*} 
\begin{equation*}
C^p_-([a,b]) := \{ f \in C^p([a,b]) \; | \; f^{(k)}(b)=0, \; 0 \leq k \leq p-1 \}.
\end{equation*} 
\begin{equation*}
C^p_0([a,b]): = C^p_+([a,b]) \cap C^p_-([a,b]).
\end{equation*} 
The set of absolutely continuous functions over $[a,b]$ is 
denoted by $AC ([a,b])$ and $AC^{p+1} ([a,b])$ is the set defined by
$$AC^{p+1} ([a,b]) :=\{ f\in C^{p} ([a,b]), \, f^{(p)} \in AC([a,b]) \} .$$
Then $C^p ([a,b]) \subset AC^p ([a,b])$.
A natural functional space for the study of classical PDEs is
\begin{equation*}
F^{p,q} ([a,b]\times\bar\Omega ): =\{f \in \mc{F}([a,b]\times\bar\Omega,\R)
\vert\,
\forall\, t \in [a,b], \, f_t \in C^p(\bar\Omega),
 \forall \, x \in \bar\Omega,\, f_x \in AC^q ([a,b]) \}.
\end{equation*}
Let $F^p ([a,b] \times \bar{\Omega} ):= F^{p,p} ([a,b]\times\bar\Omega )$ 
when $q=p$.
For any integer $m\geq0$, $H^m(\oo)$ denotes the Sobolev space of order $m$.

\subsection{Fields}
In this paper we are interested in fields  $u$ depending on time $t\in [a,b]$ and space $x\in \Omega$:
\begin{equation*}
\begin{array}{cccl}
u \, : & [a,b]\times \overline{\Omega}   & \longrightarrow & \; \R\,.  \\
               &   (t,x)    & \longmapsto & u(t,x)
\end{array}
\end{equation*} 
The notation $\nabla u(t,x) \in \R^d$ is the gradient of 
$x \mapsto u(t,x)$ and $\partial_t u(t,x) \in \R$ the partial 
derivative of $u$ according to $t$ and $\partial_{x_i} f$ 
the derivative of $f$ in the $i$-th space-variable.
\noindent
The divergence of a vector field $F=(F_1 ,\dots ,F_d )$ is $\div F =\di\sum_{i=1}^d \partial_{x_i} F_i$. 
\noindent
Let $v \, : \, \Omega \rightarrow \R$, $1 \leq i \leq d$ and $x \in \overline{\Omega}$. We denote by $v_{i,x}$ the function defined by 
\begin{equation}
\begin{array}{cccl}
v_{i,x} \, : & \Omega_{i,x} & \longrightarrow & \; \R  \\
 &   y    & \longmapsto & v(x_1,\ldots, x_{i-1}, y, x_{i+1}, \ldots, x_d).
\end{array}
\end{equation}
\noindent
If $v \, : \, \overline{\Omega} \rightarrow \R$, we denote also 
by $v$ its extension to $\R^d$ such that $v(x)=0$ if $x \in \R^d \backslash \overline{\Omega}$.\\
\noindent
For $x,y \in \R^d$, $x \times y$ denotes the vector in $\R^d$ defined by
\begin{equation}
x \times y: = (x_1 y_1, \ldots, x_d y_d)^t.
\end{equation}

\part{Variational principles and dissipative systems}
\label{part2}
\setcounter{section}{0}
In this section we discuss the problem of constructing a variational principle for dissipative systems. We review some past issues to solve this problem. Starting from the fact that irreversibility was the main obstruction, we propose to use a doubling phase space which takes into account the evolution of the system toward past or future and different time derivatives operators for each of these variables. We also discuss Riewe's approach to dissipative systems using the fractional calculus and prove that it does not give a satisfying solution. This part can be avoided in a first lecture and must be considered as an heuristic support for the mathematical framework of asymmetric fractional 
calculus of variation developed in the next part.
 \section{Obstruction to variational principles}
The classical {\it inverse problem of the calculus of variations} is the following: Having a given set of differential or partial differential equations is it possible to know if they 
derive from 
a variational principle, {\it i.e.} 
as a critical point of an explicit Lagrangian functional. 
There exists a huge literature on this subject.
The Helmholtz's conditions give necessary and sufficient conditions for a given set of equations to be obtained as an Euler-Lagrange equation. We refer to \cite{Olver} for more details. Helmholtz's conditions are algebro-analytic and are related to the self adjointness of the differential operator attached to the equations. However, they do not give an idea of the physical origin of the obstruction to the existence of a variational principle.

For dissipative systems a classical result of Bauer, \cite{Bauer}, 
in 1931 states that a linear set of differential equations with 
constant coefficients cannot 
derive from a variational principle. The main obstruction is precisely the dissipation of energy which induces a non reversible dynamic in time. Bateman, 
\cite{Ba}, pointed out that this obstruction is only valid if one understand that the variational principle does not produce additional equations. In particular, Bateman constructs a complementary set of equations  which enables him to find a variational formulation. The main idea behind Bateman's approach is that a dissipative system must be seen as physically incomplete. He does not give any additional comments,
but we think that it is related to the irreversibility of the underlying dynamics. An extension of Bateman's construction has been recently given for nonlinear evolution equations \cite{gtd}.

Contrary to dissipative systems,
for  a classical reversible system, the time evolution in the two directions
(past and future) of time is well described by a single 
differential equation.
In fact, we do not know what is the dynamical evolution if we reverse the arrow of time for a dissipative system. This is for example 
the case of the diffusion equation or the convection-diffusion equation. 
Actually, 
the diffusion phenomenon is non reversible in time.
\section{Dealing with irreversibility: doubling phase space and operators}
Bauer's theorem and Bateman construction give two distinct ways to overcome the obstruction to a variational formulation: 
\begin{itemize}
\item To avoid Bauer's obstruction to the existence of a variational principle, a natural idea is to deal with a different kind of differential calculus. This is precisely what we will do in the next part, following a previous attempt of  
Riewe (\cite{Riewe1},\cite{Riewe2}) where the Riemann-Liouville and Caputo fractional calculus are used. However, as we will see, the attempt of Riewe does not work and an appropriate modification of his formalism has to be done.

\item The lost of reversibility is interpret by Bateman as an incomplete description of a system. We suggest to say that we have two different dynamical variables $(x_+, x_- )$ which represent the evolution of the systems toward future or past. In experiments we have only access to the time evolution $x_+$, i.e. that a physical process can be modelled by a curve 
$t\mapsto x_+ (t)$. The evolution of the same physical process if we reverse the arrow of time is not known.
But this doubling of phase space is only one part of the dynamical modelization of a physical process. In experiment, we need an operator acting on the phase variables which controls the dynamical behavior. In classical mechanics, this is done using the ordinary differential calculus and in particular the classical derivative with respect to time $d/dt$. In the new setting, one must look for two time operators $d_+$ and $d_-$ acting on $x_+$ and $x_-$ which we call left and right time operators in the sequel. 
\end{itemize}

These simple remarks tell us that if we look for a variational formulation of a general set of equations then the functional must at least be defined on the doubled phase space $X=(x_+ ,x_- )$ with a differential operator $D$ 
given by $DX =(\Dpv{\alpha} x_+ ,\Dmv{\alpha} x_- )$ where $\Dpv{\alpha}$ and 
$\Dmv{\alpha}$ are right and left Riemann-Liouville or Caputo derivatives.

\section{Riewe's approach to dissipative systems: fractional mechanics}
In 1996-1997, Riewe (\cite{Riewe1},\cite{Riewe2}) defined a {\it fractional Lagrangian} framework to deal with dissipative systems. 
Riewe's theory follows from a simple observation: 
''{\it If the Lagrangian contains terms proportional 
to $\di\left ( {d^n x\over dt^n} \right )^2$, 
then the Euler-Lagrange equation will have a term proportional 
to $\di {d^{2n} x \over dt^{2n}}$. Hence a frictional 
force $\di\gamma {dx\over dt}$ should derive
from a Lagrangian containing a term proportional to the 
fractional derivative  $\di\left ( {d^{1/2}x \over dt^{1/2}} \right )^2$}'' where the notation $\di \frac{d^{1/2}}{ dt^{1/2}}$ 
represents formally an operator satisfying the composition 
rule $\di {d^{1/2} \over dt^{1/2}} \circ {d^{1/2} \over dt^{1/2}} 
={d\over dt}$.  
He then studied fractional Lagrangian functional using the left and right Riemann-Liouville fractional derivatives which satisfy the composition rule formula. 
Let $L$ a Lagrangian defined by:
\begin{eqnarray*} 
L:\, \R \times \R \times \R \times \R & \longrightarrow & \R ,\\ 
(x,v_+ ,v_- ,w ) & \longmapsto & L(x,v_+ ,v_- ,w ) 
\end{eqnarray*}
and $x: [a,b] \rightarrow \R$ a trajectory.
With this notation the functional studied by Riewe is given by
$$\mathcal{L} (x)=
\int_a^b L(x,\Dpv{1/2} x , \Dmv{1/2} x , \frac{dx}{dt} ) dt .$$
He proved that critical points $x$ of this functional correspond to the solutions of the generalised fractional 
Euler-Lagrange equation 
$$\di {d\over dt} \left ( 
\di {\partial L \over \partial w} (\star^{1/2} ) \right ) 
+ \Dmv{1/2} \left ( 
\di {\partial L \over \partial v_+} (\star^{1/2} ) \right )
* \Dpv{1/2} \left ( 
\di {\partial L \over \partial v_-} (\star^{1/2} ) \right ) =\di {\partial L\over \partial x} (\star^{1/2} ),$$
where $\star^{1/2} := (x,\Dpv{1/2} x , \Dmv{1/2} x , \frac{dx}{dt} )$.

Riewe derived such a generalised Euler-Lagrange equation for more general functionals depending on left and right Riemann-Liouville derivatives $\Dmv{\alpha}$ and $\Dpv{\alpha}$ with arbitrary $\alpha >0$ (see \cite{Riewe1}, equation (45) p. 1894). 

The main drawback of this formulation
is that the dependence of $L$ with respect to $\Dpv{1/2}$ 
(resp. $\Dmv{1/2}$) induces a derivation with respect to $\Dmv{1/2}$ 
(resp. $\Dpv{1/2}$) in the equation. 
As a consequence, we will always obtain mixed terms of the form $\Dmv{1/2} \circ \Dpv{1/2} x$ or $\Dpv{1/2} \circ \Dmv{1/2} x$
in the associated Euler-Lagrange equation. 
For example, if we consider the Lagrangian 
$$L(x,v_+ ,v_- ,w)=\di {1\over 2} m w^2 -U(x) +\di {1\over 2} \gamma v_+^2 ,$$
we obtain as a generalised Euler-Lagrange equation
$$\di m \di {d^2 x\over dt^2} +\gamma \Dmv{1/2} \circ \Dpv{1/2} x +U' (x)=0.$$
However, in general 
$$\Dmv{1/2} \circ \Dpv{1/2} x \not= \di {dx\over dt},$$
so that this theory cannot be used in order to provide a variational principle for the linear friction problem. This problem of the mixing between the left and right derivatives in the fractional calculus of variations is well known (see for example Agrawal \cite{Agrawal}). It is due to the {\it integration by parts} formula which is given for $f$ and $g$ in $C_0^0 ([a,b])$ by
$$\int_a^b f(t) \, \Dmv{\alpha} g(t) dt 
=\int_a^b \Dpv{\alpha} f(t) \, g(t) dt .$$
In (\cite{Riewe1}, p.1897) Riewe considered the limit $a\rightarrow b$ while keeping $a<b$. He then approximated $\Dmv{\alpha}$ by $\Dpv{\alpha}$. However, this approximation is not justified in general for a large class of functions so that Riewe's derivation of a variational principle for the linear friction problem is not valid. 

In \cite{cr2}, Cresson  
tried to overcome this problem by modifying the underlying set of variations in the fractional calculus. The set of variations is made of functions $h$ satisfying $\Dmv{\alpha} h =\Dpv{\alpha} h$. A critical point of the fractional functional under this restriction is called a weak critical point. In that case, we obtain that solutions of the linear friction problem corresponds to weak critical point of the functional but not an equivalence as in the usual case. The main problem is that the set of variations is too small to derive a Dubois-Raymond result. We refer in particular to the work of Klimek, \cite{Klimek},
 for more details. 

In the next section, we review the main result of \cite{ci} allowing us to obtain an equivalence between solutions of the linear friction problem and critical points of a fractional functional. 

\part{Asymmetric fractional calculus of variations}
\label{part3}
\setcounter{section}{0}
In this part, we recall the classical definitions of the left and 
right Riemann-Liouville and Caputo derivatives in the one 
dimensional case. We define the multidimensional fractional 
analogous for partial derivatives, gradient and divergence. In particular, we prove a fractional Green-Riemann theorem. We give a simplified version of the asymmetric fractional calculus of variations introduced in \cite{ci} for which we refer for more details and weaker assumptions on functional spaces. This formalism will be used in the last part to derive the convection-diffusion equation from a variational principle.
\section{Fractional calculus}
\label{sect:}
\subsection{Fractional operators: the one-dimensional case}
For a general overview of the fractional calculus and more details 
we refer to the classical book of Samko, Kilbas and Marichev, \cite{Samko}.
\subsubsection{Fractional integrals}
Let $\beta >0$, and $f$, $g$ : $[a,b] \rightarrow \R^n$.
\begin{defi}
The left and right Riemann-Liouville fractional integrals of $f$ are respectively defined by
$$
\left .
\begin{array}{lll}
(\Ipv{\beta} f)(t) & = & \di {1\over \Gamma (\beta )} \di \int_a^t (t-\tau )^{\beta -1} f(\tau ) d\tau , \\
(\Imv{\beta} f)(t)& = & \di {1\over \Gamma (\beta )} \di \int_t^b (\tau -t)^{\beta -1} f(\tau ) d\tau , \\
\end{array}
\right .
$$
for $t\in [a,b]$, where $\Gamma$ is the Gamma function.
\end{defi}
\subsubsection{Fractional derivatives}

Let $\alpha >0$. Let $p\in\N$ such that $p-1 \leq \alpha <p$.
\begin{defi}
Let $t\in [a,b]$, the function $f$ is left (resp. right) 
Riemann-Liouville  differentiable of order $\alpha$ at $t$ 
if the quantity
$$\Dpv{\alpha} f(t)= \left ( \di {d^p \over dt^p} \circ \Ipv{p-\alpha} \right ) f(t)\ \  
\; \text{and}\quad
\Dmv{\alpha} f(t)= \left ( (-1)^p \di {d^p \over dt^p} \circ \Imv{p-\alpha} \right ) f(t),$$
exist respectively. The value $\Dpv{\alpha} f(t)$ (resp. $\Dmv{\alpha} f(t)$) is called the left (resp. right) Riemann-Liouville fractional derivative of order $\alpha$ of $f$ at $t$.
\end{defi}  
Exchanging the order of composition we obtain the left and 
right Caputo fractional derivatives:
\begin{defi}
Let $t\in [a,b]$, the function $f$ is left (resp. right) Caputo 
differentiable of order $\alpha$  at $t$ if the quantity
$${}^c\Dpv{\alpha} f(t)= \left ( \di  \Ipv{p-\alpha} \circ {d^p \over dt^p} \right ) f(t)\ \
\; \text{ and}\quad
{}^c\Dmv{\alpha} f(t)= \left (  \di \Imv{p-\alpha} \circ (-1)^p {d^p \over dt^p} \right ) f(t),$$
exist respectively. The value ${}^c\Dpv{\alpha} f(t)$ (resp. ${}^c\Dmv{\alpha} f(t)$) is called the left (resp. right) fractional derivative of order $\alpha$ of $f$ at $t$.
\end{defi}  

In a general setting the subscript ${}^c$ refers to Caputo derivative, whereas no subscript is used to specify the Riemann-Liouville 
fractional derivative. 
\subsection{Properties}
Let us mention the following relations linking both Riemann-Liouville and Caputo fractional derivatives: 
\begin{lem} 
\label{caputo-RL}
Let $0\leq \alpha <1$ and $f \in AC^1 ([a,b])$ then $\Dpv{\alpha}$ and $\Dmv{\alpha}$ exist almost everywhere and 
\begin{eqnarray*}
\Dpv{\alpha} f(t) = {}^c\Dpv{\alpha} f(t) +\frac{(t-a)^{-\alpha}}{\Gamma(1-\alpha)}f(a)\\
\Dmv{\alpha} f(t) = {}^c\Dmv{\alpha} f(t) +\frac{(b-t)^{-\alpha}}{\Gamma(1-\alpha)}f(b)\,.
\end{eqnarray*}
\end{lem}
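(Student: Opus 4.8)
The plan is to establish both identities from the definitions of the Riemann--Liouville and Caputo fractional derivatives, exploiting the fact that for $0 \leq \alpha < 1$ we have $p=1$, so each derivative involves only a single ordinary derivative $d/dt$ composed with a fractional integral of order $1-\alpha$. I would prove the first (left) identity in detail and then indicate that the second (right) identity follows by the symmetric argument, with the roles of $a$ and $b$ interchanged and the sign factor $(-1)^p = -1$ accounted for.

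First I would write out the left Riemann--Liouville derivative as $\Dpv{\alpha} f(t) = \frac{d}{dt}\, (\Ipv{1-\alpha} f)(t)$ and the left Caputo derivative as ${}^c\Dpv{\alpha} f(t) = (\Ipv{1-\alpha} f')(t)$, the latter making sense almost everywhere because $f \in AC^1([a,b])$ guarantees $f'$ exists a.e. and is integrable. The key computational step is an integration by parts inside the Riemann--Liouville integral: writing
\begin{equation*}
(\Ipv{1-\alpha} f)(t) = \frac{1}{\Gamma(1-\alpha)} \int_a^t (t-\tau)^{-\alpha} f(\tau)\, d\tau,
\end{equation*}
I would integrate by parts, differentiating $f$ and integrating the kernel $(t-\tau)^{-\alpha}$ (whose $\tau$-antiderivative is $-\frac{(t-\tau)^{1-\alpha}}{1-\alpha}$). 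This produces a boundary term at $\tau=a$ proportional to $(t-a)^{1-\alpha} f(a)$ and a remaining integral involving $f'$, which is exactly $\Gamma(2-\alpha)$ times $(\Ipv{2-\alpha} f')(t)$ after adjusting the Gamma-function constants via $\Gamma(2-\alpha) = (1-\alpha)\Gamma(1-\alpha)$.

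Having rewritten $\Ipv{1-\alpha} f$ as a boundary term plus a fractional integral of $f'$, I would then apply $\frac{d}{dt}$ to both pieces. Differentiating the boundary term $\frac{(t-a)^{1-\alpha}}{\Gamma(2-\alpha)} f(a)$ yields precisely $\frac{(t-a)^{-\alpha}}{\Gamma(1-\alpha)} f(a)$, the correction term appearing in the statement. Differentiating the term $(\Ipv{2-\alpha} f')(t)$ recovers $(\Ipv{1-\alpha} f')(t) = {}^c\Dpv{\alpha} f(t)$, using the semigroup/composition property $\frac{d}{dt} \circ \Ipv{\beta} = \Ipv{\beta-1}$ for $\beta > 1$. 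Summing the two contributions gives the claimed formula.

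I expect the main obstacle to be the justification of differentiating under the integral sign and of the integration by parts at the boundary, since the kernel $(t-\tau)^{-\alpha}$ is singular as $\tau \to t$. The hypothesis $f \in AC^1([a,b])$ is what makes this rigorous: it ensures $f'$ is integrable so that the a.e. existence of both derivatives holds, and it controls the boundary contributions. The careful bookkeeping of the Gamma-function constants and the singular boundary behavior is the technical heart of the argument, while the overall structure is a routine integration-by-parts identity in fractional calculus.
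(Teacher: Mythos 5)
Your proof is correct. There is, however, nothing in the paper to compare it against: the authors do not prove this lemma but simply cite (Samko--Kilbas--Marichev, Thm.~2.2, p.~39) for it, and your argument is essentially the proof given in that reference. The only difference is mechanical: you derive the intermediate identity
\begin{equation*}
(\Ipv{1-\alpha} f)(t) = \frac{(t-a)^{1-\alpha}}{\Gamma(2-\alpha)}\, f(a) + (\Ipv{2-\alpha} f')(t)
\end{equation*}
by integrating by parts against the kernel (legitimate here, since $f$ and the kernel antiderivative $\tau \mapsto -(t-\tau)^{1-\alpha}/(1-\alpha)$ are both absolutely continuous on $[a,t]$, so the singularity causes no trouble), whereas the textbook inserts the representation $f(\tau) = f(a) + \int_a^\tau f'(s)\,ds$ into the definition of $\Ipv{1-\alpha} f$ and exchanges the order of integration by Fubini; these are the same computation. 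Two details that you flagged as ``the technical heart'' deserve to be written out explicitly. First, the step $\frac{d}{dt}(\Ipv{2-\alpha} f')(t) = (\Ipv{1-\alpha} f')(t)$ holds only almost everywhere, and is cleanly justified by writing $\Ipv{2-\alpha} f' = \Ipv{1}\bigl(\Ipv{1-\alpha} f'\bigr)$ via the semigroup property of fractional integrals of $L^1$ functions and then invoking the Lebesgue differentiation theorem; since the boundary term is differentiable everywhere on $(a,b]$, this is precisely where the a.e.\ existence of $\Dpv{\alpha} f$ asserted in the lemma comes from. Second, in the right-sided case the two sign conventions $\Dmv{\alpha} f = -\frac{d}{dt}\,\Imv{1-\alpha} f$ and ${}^c\Dmv{\alpha} f = -\,\Imv{1-\alpha} f'$ cancel so as to produce the correction term $+\frac{(b-t)^{-\alpha}}{\Gamma(1-\alpha)}\,f(b)$ with a plus sign, confirming the symmetric bookkeeping you sketched.
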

The proof can be found in (\cite{Samko}, thm.2.2, p.39).
The following lemma concerns the semi-group property and the integration by parts formula for the fractional derivatives of both Riemann-Liouville and Caputo types:
\begin{lem} 
\label{group-part}
\begin{enumerate}
\item Composition rule formula: Let $f\in AC^2 ([a,b])$, then
\begin{eqnarray*}
 {}^c\Dpv{1/2} \circ  {}^c\Dpv{1/2}  =  \frac{d}{dt} & \quad
& \Dpv{1/2}\circ  {}^c\Dpv{1/2} =  \frac{d}{dt}.
\end{eqnarray*}
\item Integration by parts formula: 
Let $0\leq \alpha <1$. 
Let $f\in AC^1 ([a,b])$ and $g\in C_0^1 ([a,b])$ then
\begin{eqnarray*}
\int_a^b ({}^c\Dpv{\alpha} f)(t)g(t)dt & =  
& \int_a^b f(t)({}^c\Dmv{\alpha} g)(t)dt \\
\int_a^b (\Dpv{\alpha} f)(t)g(t)dt & =  
& \int_a^b f(t)({}^c\Dmv{\alpha} g)(t)dt .
\end{eqnarray*}
\end{enumerate}
\end{lem}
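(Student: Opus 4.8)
The plan is to reduce everything to two elementary facts about the one--dimensional Riemann--Liouville integrals: the semigroup law $\mathcal{I}^{1/2}_+ \circ \mathcal{I}^{1/2}_+ = \mathcal{I}^1_+$ (so that $\mathcal{I}^1_+\phi(t)=\int_a^t\phi$ and $\frac{d}{dt}\mathcal{I}^1_+\phi=\phi$ for $\phi\in AC$), and the Fubini identity $\int_a^b(\mathcal{I}^\beta_+\phi)(t)\,g(t)\,dt=\int_a^b\phi(t)\,(\mathcal{I}^\beta_-g)(t)\,dt$, valid for integrable $\phi,g$ and obtained by exchanging the order of integration over the triangle $a\le\tau\le t\le b$. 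I would prove the composition rules first and the integration by parts formulas second, using Lemma \ref{caputo-RL} as the bridge between the Riemann--Liouville and Caputo derivatives throughout.

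For the composition rules, recall that with $\alpha=1/2$ one has $p=1$, so $\,{}^c\mathcal{D}^{1/2}_+=\mathcal{I}^{1/2}_+\circ\frac{d}{dt}$ and $\mathcal{D}^{1/2}_+=\frac{d}{dt}\circ\mathcal{I}^{1/2}_+$. The second identity is then immediate: for $f\in AC^2([a,b])$ one computes $\mathcal{D}^{1/2}_+\,{}^c\mathcal{D}^{1/2}_+f=\frac{d}{dt}\mathcal{I}^{1/2}_+\mathcal{I}^{1/2}_+f'=\frac{d}{dt}\mathcal{I}^1_+f'=\frac{d}{dt}\bigl(f-f(a)\bigr)=f'$. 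For the first identity I would not recompute from scratch but compare the two compositions: applying Lemma \ref{caputo-RL} to $h:={}^c\mathcal{D}^{1/2}_+f=\mathcal{I}^{1/2}_+f'$ gives $\mathcal{D}^{1/2}_+h={}^c\mathcal{D}^{1/2}_+h+\frac{(t-a)^{-1/2}}{\Gamma(1/2)}h(a)$, and since $h(a)=(\mathcal{I}^{1/2}_+f')(a)=0$ the two compositions coincide, whence $\,{}^c\mathcal{D}^{1/2}_+\,{}^c\mathcal{D}^{1/2}_+f=\mathcal{D}^{1/2}_+\,{}^c\mathcal{D}^{1/2}_+f=\frac{df}{dt}$. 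The point needing care is the regularity required to invoke Lemma \ref{caputo-RL} on $h$, namely that $\mathcal{I}^{1/2}_+f'$ is again in $AC^1([a,b])$ when $f'\in AC^1$; this is exactly what the hypothesis $f\in AC^2$ is there to guarantee.

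For the integration by parts formulas, I would write the left Caputo derivative as $\,{}^c\mathcal{D}^\alpha_+f=\mathcal{I}^{1-\alpha}_+f'$, apply the Fubini identity with $\beta=1-\alpha$ and $\phi=f'$ to get $\int_a^b({}^c\mathcal{D}^\alpha_+f)\,g=\int_a^bf'(t)\,(\mathcal{I}^{1-\alpha}_-g)(t)\,dt$, and then integrate by parts classically. Using $\frac{d}{dt}\mathcal{I}^{1-\alpha}_-g=-\mathcal{D}^\alpha_-g$ and Lemma \ref{caputo-RL} together with $g(b)=0$ (so that $\mathcal{D}^\alpha_-g={}^c\mathcal{D}^\alpha_-g$), this produces the claimed integrand $f\,{}^c\mathcal{D}^\alpha_-g$ plus the boundary contribution $\bigl[f\,\mathcal{I}^{1-\alpha}_-g\bigr]_a^b$. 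The Riemann--Liouville version then follows by writing $\mathcal{D}^\alpha_+f={}^c\mathcal{D}^\alpha_+f+\frac{(t-a)^{-\alpha}}{\Gamma(1-\alpha)}f(a)$ and noticing that $\frac{1}{\Gamma(1-\alpha)}\int_a^b(t-a)^{-\alpha}g(t)\,dt=(\mathcal{I}^{1-\alpha}_-g)(a)$ is precisely the surviving boundary term, so the two cancel and the clean identity remains.

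I expect the boundary term $\bigl[f\,\mathcal{I}^{1-\alpha}_-g\bigr]_a^b=f(b)(\mathcal{I}^{1-\alpha}_-g)(b)-f(a)(\mathcal{I}^{1-\alpha}_-g)(a)$ to be the main obstacle. The endpoint $t=b$ is harmless since $(\mathcal{I}^{1-\alpha}_-g)(b)=0$, but the left endpoint contributes $-f(a)(\mathcal{I}^{1-\alpha}_-g)(a)$, which is not controlled by $g\in C_0^1$ alone; it is exactly this term that cancels against the correction furnished by Lemma \ref{caputo-RL} in the Riemann--Liouville--Caputo identity. I would therefore organise the write-up so that the Riemann--Liouville--Caputo formula is established first and the Caputo--Caputo formula is deduced from it, keeping explicit track of the left-endpoint term and of the integrability of $(\tau-a)^{-\alpha}g(\tau)$ near $\tau=a$, which is ensured by $g(a)=0$.
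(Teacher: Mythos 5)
Your part~1 is correct, and so is your derivation of the \emph{second} integration by parts formula: the semigroup law $\Ipv{1/2}\circ\Ipv{1/2}=\Ipv{1}$ gives $\Dpv{1/2}\circ{}^c\Dpv{1/2}=\frac{d}{dt}$ directly, and reducing the Caputo--Caputo composition to it via Lemma \ref{caputo-RL} applied to $h=\Ipv{1/2}f'$ is legitimate, since $h(a)=0$ and $h\in AC^1([a,b])$ when $f\in AC^2([a,b])$ (write $h=f'(a)(t-a)^{1/2}/\Gamma(3/2)+\Ipv{3/2}f''$, both summands being absolutely continuous). Likewise, Fubini plus classical integration by parts plus the cancellation of the left boundary term against the correction of Lemma \ref{caputo-RL} proves the Riemann--Liouville--Caputo formula exactly as you describe. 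Note there is nothing in the paper to compare against: it gives no proof and refers to \cite{Samko}, p.~46.

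The genuine gap is the \emph{first} integration by parts formula, and it is not a gap you can close, because that formula is false as stated. Your own bookkeeping shows
\begin{equation*}
\int_a^b ({}^c\Dpv{\alpha} f)(t)\,g(t)\,dt
=\int_a^b f(t)\,({}^c\Dmv{\alpha} g)(t)\,dt-f(a)\,(\Imv{1-\alpha}g)(a),
\qquad
(\Imv{1-\alpha}g)(a)=\tfrac{1}{\Gamma(1-\alpha)}\int_a^b(\tau-a)^{-\alpha}g(\tau)\,d\tau ,
\end{equation*}
and this last term does not vanish for $g\in C^1_0([a,b])$: the vanishing of $g$ at the endpoints kills the boundary contribution at $b$, but not this one. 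Your plan to ``deduce the Caputo--Caputo formula from the Riemann--Liouville--Caputo one'' therefore cannot succeed: the two left-hand sides $\int(\Dpv{\alpha}f)g$ and $\int({}^c\Dpv{\alpha}f)g$ differ by exactly $f(a)(\Imv{1-\alpha}g)(a)$, yet the lemma asserts they equal the same right-hand side. Concretely, take $f\equiv 1$ and $g(t)=(t-a)(b-t)$: then ${}^c\Dpv{\alpha}f=0$, so the claimed identity would force $\int_a^b{}^c\Dmv{\alpha}g\,dt=0$, whereas a direct computation gives $\int_a^b{}^c\Dmv{\alpha}g\,dt=(\Imv{1-\alpha}g)(a)>0$. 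The first formula is only true under the additional hypothesis $f(a)=0$, in which case your argument does prove it. So the defect lies in the statement rather than in your strategy, and a careful write-up along your lines would have surfaced it; you noticed the uncontrolled left-endpoint term but then asserted it could be organised away, which is the step that fails. It is worth adding that the paper never actually uses the false version: in Lemma \ref{ext_frac} and Lemma \ref{lemmeintermediaire} only the second formula (or its left/right mirror) is invoked, the Caputo derivative always falling on the function that vanishes at both endpoints.
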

We refer to \cite{Samko}, p.46 for a proof.
\begin{lem}[Regularity]
\label{regularity}
Let $\alpha >0$ and $p\in \N$.
\begin{enumerate}
\item If $f\in C^1 ([a,b])$, then ${}^c \Dpv{\alpha} f \in C^0_+ ([a,b])$.
\item If $f\in C^1_+ ([a,b])$, then $\Dpv{\alpha} f ={}^c\Dpv{\alpha} f$, and $\Dpv{\alpha} f \in C^0_+ ([a,b])$.
\item If $f\in C^{p+1}_+ ([a,b])$, 
then ${}^c \Dpv{\alpha} f \in C^p_+ ([a,b])$.
\end{enumerate}
\end{lem}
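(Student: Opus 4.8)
The plan is to prove Lemma~\ref{regularity} by reducing each of the three statements about the Caputo derivative ${}^c\Dpv{\alpha}$ to a regularity statement about a Riemann-Liouville \emph{integral} $\Ipv{\beta}$ applied to a sufficiently smooth function, and then to establish the needed regularity and boundary-vanishing properties of that integral directly. First I would fix $\alpha>0$ and $p\in\N$ with $p-1\le\alpha<p$, so that by definition ${}^c\Dpv{\alpha}f=\Ipv{p-\alpha}\circ\frac{d^p}{dt^p}f$. The whole lemma then hinges on understanding the map $g\mapsto(\Ipv{\beta}g)(t)=\frac{1}{\Gamma(\beta)}\int_a^t(t-\tau)^{\beta-1}g(\tau)\,d\tau$ for $\beta=p-\alpha\in(0,1]$ and continuous $g$, in particular its continuity up to $t=a$ and the fact that it vanishes at the left endpoint.

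For part (1), with $f\in C^1([a,b])$ we have $f'\in C^0([a,b])$, so I would show that $\Ipv{\beta}g\in C^0([a,b])$ whenever $g\in C^0([a,b])$, and that $(\Ipv{\beta}g)(a)=0$. Continuity on $(a,b]$ follows from dominated convergence applied to the integral, since the kernel $(t-\tau)^{\beta-1}$ is integrable for $\beta>0$; continuity at $a$ together with the vanishing at $a$ follows from the estimate $|(\Ipv{\beta}g)(t)|\le\frac{\|g\|_\infty}{\Gamma(\beta)}\frac{(t-a)^{\beta}}{\beta}\to0$ as $t\to a^+$. This places ${}^c\Dpv{\alpha}f=\Ipv{\beta}f'$ in $C^0_+([a,b])$, which is exactly the claim. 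Part (2) is then essentially a corollary: if $f\in C^1_+([a,b])$ then $f(a)=0$, so Lemma~\ref{caputo-RL} (whose correction term is $\frac{(t-a)^{-\alpha}}{\Gamma(1-\alpha)}f(a)$, here taking $0\le\alpha<1$, i.e. $p=1$) gives $\Dpv{\alpha}f={}^c\Dpv{\alpha}f$ directly, and the membership in $C^0_+([a,b])$ is inherited from part (1).

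For part (3) I would proceed by induction on $p$, the base case $p=0$ being part (1). Here the key technical tool is a commutation/differentiation rule for the fractional integral: for $g\in C^1_+([a,b])$ (so $g(a)=0$) one has $\frac{d}{dt}\Ipv{\beta}g=\Ipv{\beta}g'$, obtained by differentiating under the integral sign after an integration by parts that kills the boundary term precisely because $g(a)=0$. Starting from $f\in C^{p+1}_+([a,b])$, I would write ${}^c\Dpv{\alpha}f=\Ipv{\beta}f^{(p)}$ and then peel off derivatives: the hypothesis $f^{(k)}(a)=0$ for $0\le k\le p$ guarantees that each application of the differentiation rule is legitimate and produces a function still vanishing at $a$, yielding ${}^c\Dpv{\alpha}f\in C^p_+([a,b])$.

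The main obstacle is the behavior at the left endpoint $t=a$, where the kernel $(t-\tau)^{\beta-1}$ is singular when $\beta<1$; all the genuine work lies in the uniform estimates near $a$ that simultaneously give continuity and the vanishing of the successive derivatives there, and in verifying that the boundary terms in the repeated integration by parts vanish under the $C^{p+1}_+$ hypotheses. Everything away from $a$ is a routine application of dominated convergence and differentiation under the integral sign, so I would concentrate the argument on making the endpoint estimates clean and on setting up the induction so that the vanishing conditions $f^{(k)}(a)=0$ are consumed in exactly the right order.
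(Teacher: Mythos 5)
Your overall route coincides with the paper's: part (1) is reduced to the statement that $\Ipv{\beta} g \in C^0_+([a,b])$ for continuous $g$ (the paper simply cites Samko, Thm.~3.1, p.~53, with $\lambda=0$, whereas you prove it directly from the bound $\vert (\Ipv{\beta} g)(t)\vert \le \Vert g\Vert_\infty (t-a)^{\beta}/(\beta\,\Gamma(\beta))$ and dominated convergence --- a legitimate, self-contained substitute); part (2) is part (1) combined with Lemma~\ref{caputo-RL} and $f(a)=0$, exactly as in the paper; and part (3) rests on the commutation rule $\frac{d}{dt}\Ipv{\beta} g = \Ipv{\beta} g'$, valid when $g(a)=0$, which is also the engine of the paper's proof.

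There is, however, a genuine flaw in your setup of part (3): you tie the lemma's $p$ to $\alpha$ by $p-1\le\alpha<p$ and accordingly write ${}^c\Dpv{\alpha} f = \Ipv{p-\alpha} f^{(p)}$. Under that reading the argument cannot close: each application of your commutation rule costs one order of differentiability of $f$ beyond order $p$, so from $f\in C^{p+1}_+([a,b])$ you can peel off only \emph{one} derivative (the second application would require $f^{(p+1)}(a)=0$ and $f^{(p+2)}\in C^0$, neither of which is available), yielding ${}^c\Dpv{\alpha} f \in C^1_+$ rather than $C^p_+$. Indeed the statement is \emph{false} in that reading: for $\alpha=3/2$ (so $p=2$) and $f(t)=(t-a)^3 \in C^3_+([a,b])$ one computes ${}^c\Dpv{3/2} f = \frac{6}{\Gamma(5/2)}\,(t-a)^{3/2}$, whose second derivative blows up like $(t-a)^{-1/2}$ at $t=a$, so ${}^c\Dpv{3/2} f \notin C^2_+([a,b])$. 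Your own induction ``on $p$ with base case $p=0$'' also cannot be run if $p$ is determined by $\alpha>0$. The intended reading --- the one the paper's proof uses, and the one already forced on you by parts (1)--(2), where you write ${}^c\Dpv{\alpha} f = \Ipv{\beta} f'$ and invoke Lemma~\ref{caputo-RL} --- is that $0<\alpha<1$ throughout, so that ${}^c\Dpv{\alpha} f = \Ipv{1-\alpha} f'$, while $p$ is an \emph{independent} smoothness index. With that reading your peeling argument is exactly the paper's proof: for $0\le k\le p$, the hypotheses $f^{(k)}(a)=0$ justify $\frac{d^k}{dt^k}\, {}^c\Dpv{\alpha} f = \Ipv{1-\alpha} f^{(k+1)}$, and each right-hand side lies in $C^0_+([a,b])$ by part (1), whence ${}^c\Dpv{\alpha} f \in C^p_+([a,b])$.
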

The previous lemma is stated under more general but implicit assumptions 
(as $f\in \Ipv{\alpha}  (L^1 )$) in \cite{Samko}. However, as we need explicit conditions on the functional spaces in order to develop the fractional calculus of variations, we give a less general result but with explicit functional spaces.
\begin{proof}
\begin{enumerate}
\item As $\di\frac{df}{dt} \in C^0 ([a,b])$, 
we have $\di\Ipv{\alpha}\bigl( \frac{df}{dt}\bigl) \in C^0_+ ([a,b])$ 
using (Thm. 3.1 of \cite{Samko}, p.53 with $\lambda=0$).  

\item It results from 1. and  lemma \ref{caputo-RL}.

\item Let $1\leq k \leq p$. As $f\in C^{p+1}_+ ([a,b])$, then 
$\di\frac{df}{dt}\in C^k_+ ([a,b])$ 
$$\di {d^k \over dt^k} {}^c \Dpv{\alpha}  f = \di {d^k \over dt^k} \Ipv{1-\alpha} \frac{df}{dt} =\Ipv{1-\alpha} f^{(k+1)} .$$
As $f^{(k+1)} \in C^0 ([a,b])$, we have $\Ipv{1-\alpha} f^{(k+1)} \in C^0_+ ([a,b])$ using (Thm. 3.1 of \cite{Samko}, p.53 with $\lambda=0$). Then ${}^c \Dpv{\alpha} f \in C^k ([a,b])$ and $\di {d^k \over dt^k} {}^c \Dpv{\alpha} f (a) =0$ for $1\leq k\leq p$. Moreover ${}^c \Dpv{\alpha} f (a)=0$ by 1., so that ${}^c \Dpv{\alpha} f \in C^p_+ ([a,b])$.
\end{enumerate}
\end{proof}
A similar result holds for the right derivative.
\subsection{Fractional operators: the multidimensional case}
We generalise the previous definitions and properties on $\Omega$ by introducing the multidimensional fractional operators.

\subsubsection{Definitions and notations}

For a function $v:\oo \rightarrow \R$, we denote by $v_{i,x}$ the function  
defined on $\Omega_{i,x}$, as in \eqref{omegaix} acting in the $i$-th component of $v$.
The $i$-th partial fractional derivatives are given by:
\begin{equation*}
\partial^\alpha_i v(x) := \Dpv{\alpha} v_{i,x}(x_i), \quad {}^c \partial^\alpha_i v(x): = {}^c\Dpv{\alpha} v_{i,x}(x_i),
\end{equation*} 
the right fractional Riemann-Liouville and Caputo partial 
derivatives with respect to the $i$-th component of $v$.
In a same way the left fractional Riemann-Liouville and Caputo partial derivatives are given by
\begin{equation*}
\overline{\partial}^\alpha_i v(x) := \Dmv{\alpha} v_{i,x}(x_i), \quad \overline{{}^c \partial}^\alpha_i v(x) := {}^c\Dmv{\alpha} v_{i,x}(x_i).
\end{equation*} 
The associated Riemann-Liouville and Caputo {\it fractional gradient} of $u$, denoted by $\nabla^\alpha u$ and ${}^c \nabla^\alpha u$, are defined by
\begin{equation*}
\begin{array}{cc}
\nabla^\alpha u(t,x) := 
\begin{pmatrix}
\partial^\alpha_1 u(t,x) \\
\vdots \\
\partial^\alpha_d u(t,x)
\end{pmatrix},
&\quad
{}^c \nabla^\alpha u(t,x) := 
\begin{pmatrix}
{}^c \partial^\alpha_1 u(t,x) \\
\vdots \\
{}^c \partial^\alpha_d u(t,x)
\end{pmatrix}.
\end{array}
\end{equation*}  
The Riemann-Liouville and Caputo {\it fractional divergence} 
of $v \, : \, \Omega \rightarrow \R^d$ are defined by 
\begin{equation*}
\divg^\alpha v(x) = \sum_{i=1}^d \partial^\alpha_i v_i(x)\quad
{}^c \divg^\alpha v(x) = \sum_{i=1}^d {}^c \partial^\alpha_i v_i(x).
\end{equation*} 
The analogous definitions of the left fractional gradient and 
divergence hold adding a bar on the previous symbols.
\begin{lem}
\label{lem:div-grad}
Let $\gamma=(\gamma_1 ,\dots ,\gamma_d) \in \R^d$, $u \in AC^2 (\overline{\Omega})$ and $x \in \Omega$, then we have
\begin{equation*}
\label{div-grad}
\divg^{1/2} \left( \gamma \times {}^c \nabla^{1/2} u(x) \right) = \gamma \cdot \nabla u(x).
\end{equation*} 
\end{lem}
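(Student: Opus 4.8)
The plan is to unwind the definitions until the left-hand side becomes a sum of one-dimensional compositions, one per coordinate direction, and then to invoke the composition rule of Lemma \ref{group-part}. First I would write out the componentwise product and the fractional divergence: since the $i$-th component of ${}^c \nabla^{1/2} u$ is ${}^c \partial^\alpha_i u$ and $\gamma$ is a constant vector,
\begin{equation*}
\divg^{1/2}\bigl( \gamma \times {}^c \nabla^{1/2} u \bigr)(x) = \sum_{i=1}^d \partial^{1/2}_i\bigl( \gamma_i \, {}^c \partial^{1/2}_i u \bigr)(x) = \sum_{i=1}^d \gamma_i \, \partial^{1/2}_i\bigl( {}^c \partial^{1/2}_i u \bigr)(x),
\end{equation*}
where I have used that each $\partial^{1/2}_i$ is linear and that the scalar $\gamma_i$ is constant along the line $\delta_{i,x}$, so it may be pulled out of the (nonlocal) operator.

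The key observation — and the point that needs the most care — is that the two partial fractional operators in the direction $e_i$ genuinely compose as one-dimensional operators along the segment $\Omega_{i,x}$. Indeed, the line $\delta_{i,x} = x + \mathrm{Span}(e_i)$ is invariant under translation by $e_i$, so if $y$ denotes the point obtained from $x$ by replacing its $i$-th coordinate by an arbitrary $t$, then $u_{i,y}$ coincides with $u_{i,x}$ as a function on $\Omega_{i,x}$. Consequently the function $x \mapsto {}^c \partial^{1/2}_i u(x)$, restricted to the $i$-th line through $x$, is exactly $s \mapsto {}^c\Dpv{1/2} u_{i,x}(s)$, and therefore
\begin{equation*}
\partial^{1/2}_i\bigl( {}^c \partial^{1/2}_i u \bigr)(x) = \bigl( \Dpv{1/2} \circ {}^c\Dpv{1/2} \bigr) u_{i,x}(x_i).
\end{equation*}

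It then remains to apply Lemma \ref{group-part}. The hypothesis $u \in AC^2(\overline{\Omega})$ is meant precisely to ensure that each restriction $u_{i,x}$ lies in $AC^2(\Omega_{i,x})$, so that the composition rule formula $\Dpv{1/2} \circ {}^c\Dpv{1/2} = \frac{d}{dt}$ applies and gives $\partial^{1/2}_i({}^c \partial^{1/2}_i u)(x) = \partial_{x_i} u(x)$. Substituting back into the first display yields
\begin{equation*}
\divg^{1/2}\bigl( \gamma \times {}^c \nabla^{1/2} u \bigr)(x) = \sum_{i=1}^d \gamma_i \, \partial_{x_i} u(x) = \gamma \cdot \nabla u(x),
\end{equation*}
which is the claim.

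The only genuine obstacle is the reduction to one dimension carried out in the second paragraph; once the invariance of $\delta_{i,x}$ under translation along $e_i$ is noted, everything collapses to the scalar composition identity. I would also emphasize that the constancy of $\gamma$ is what legitimizes the pulling-out step: for a non-constant $\gamma$ the operator $\partial^{1/2}_i$ would act on the product $\gamma_i u$ and, being nonlocal, would not factor, so the identity would genuinely fail outside the constant-coefficient setting announced in the abstract.
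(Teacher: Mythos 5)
Your proof is correct and follows essentially the same route as the paper's: expand the fractional divergence componentwise, pull out the constant $\gamma_i$ by linearity, and apply the composition rule $\Dpv{1/2} \circ {}^c\Dpv{1/2} = \frac{d}{dt}$ from Lemma \ref{group-part} to each direction. Your second paragraph simply makes explicit the one-dimensional reduction along $\Omega_{i,x}$ that the paper's proof treats as immediate, which is a welcome (and sound) addition of detail.
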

\begin{proof}
Since $\gamma \times {}^c \nabla^{1/2} u = 
\begin{pmatrix}
\gamma_1 \, {}^c \partial^{1/2}_1 u  \\
\vdots  \\
\gamma_d \, {}^c \partial^{1/2}_d u
\end{pmatrix} ,
$
then 
\begin{equation*}
\divg^{1/2} \left( \gamma \times {}^c \nabla^{1/2} u(x) \right) = 
\sum_{i=1}^d \gamma_i \, \left( \partial^{1/2}_i \circ {}^c \partial^{1/2}_i \right) u(x).
\end{equation*}
Applying lemma \ref{group-part} concludes the proof. 
\end{proof}
\subsubsection{Fractional Green-Riemann formula}
The following lemma is a consequence of the unidimensional 
integration by parts formula of lemma \ref{caputo-RL} and leads 
to the fractional version of the Green-Riemann 
theorem mixing Caputo and Riemann-Liouville derivatives given in lemma 
\ref{green_frac}.
\begin{lem}
\label{lemmeintermediaire}
Let 
$u\in AC^1 (\overline{\Omega} )$ and $v\in C^1_0 (\Omega )$ then
$$
\int_{\Omega} u(x)  \overline{{}^c \partial}^\alpha_i v(x) dx =\int_{\Omega}  \partial^\alpha_i u (x)  v(x) dx .
$$
\end{lem}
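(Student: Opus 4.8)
The plan is to reduce the $d$-dimensional identity to the one-variable integration by parts formula of Lemma \ref{group-part} by slicing $\overline{\Omega}$ along the $i$-th coordinate direction. First I would write each integral over $\Omega$ as an iterated integral via Fubini's theorem, isolating the $i$-th variable: denoting by $\hat{x}=(x_1,\dots,x_{i-1},x_{i+1},\dots,x_d)$ the transverse coordinates and by $t=x_i$ the coordinate running along $\delta_{i,x}$, each side becomes an integral over $\hat{x}$ of an inner integral over the segment $\Omega_{i,x}=[a_{i,x},b_{i,x}]$. On such a slice the quantities reduce to the one-dimensional functions $u_{i,x}$ and $v_{i,x}$, and by the very definitions $\overline{{}^c\partial}^\alpha_i v(x)={}^c\Dmv{\alpha}v_{i,x}(t)$ and $\partial^\alpha_i u(x)=\Dpv{\alpha}u_{i,x}(t)$, so that the inner integrals are exactly
\begin{equation*}
\int_{a_{i,x}}^{b_{i,x}} u_{i,x}(t)\,{}^c\Dmv{\alpha}v_{i,x}(t)\,dt
\quad\text{and}\quad
\int_{a_{i,x}}^{b_{i,x}} \Dpv{\alpha}u_{i,x}(t)\,v_{i,x}(t)\,dt.
\end{equation*}

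Before invoking the one-dimensional formula I would verify that the slices meet its hypotheses. Since $u\in AC^1(\overline{\Omega})$, its restriction $u_{i,x}$ lies in $AC^1([a_{i,x},b_{i,x}])$ for almost every transverse $\hat{x}$, which is the regularity demanded of $f$ in Lemma \ref{group-part}. For $v$, the crucial point is that the endpoints $a_{i,x}$ and $b_{i,x}$ of the segment $\Omega_{i,x}=\overline{\Omega}\cap\delta_{i,x}$ lie on $\partial\Omega$; as $v\in C^1_0(\Omega)$ vanishes on the boundary, the slice $v_{i,x}$ vanishes at both endpoints and hence belongs to $C^1_0([a_{i,x},b_{i,x}])$, exactly the space required of $g$. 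With these checks in place, the integration by parts formula of Lemma \ref{group-part} shows that the two inner integrals coincide for each fixed $\hat{x}$.

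Finally I would integrate this slicewise equality over $\hat{x}$ and reassemble the iterated integrals into integrals over $\Omega$, again by Fubini, which yields the claimed identity. The main obstacle is not the algebra but the measure-theoretic bookkeeping underlying the two appeals to Fubini: one must justify integrability of $u\,\overline{{}^c\partial}^\alpha_i v$ and $(\partial^\alpha_i u)\,v$ over $\Omega$ -- the fractional derivatives may develop singularities of order $(t-a_{i,x})^{-\alpha}$ near the ends of the slices, but these remain integrable precisely because $0\le\alpha<1$ -- and one must confirm that the absolute continuity of $u$ passes to the slices $u_{i,x}$ for almost every $\hat{x}$, so that the one-dimensional formula is legitimately applicable on almost every segment.
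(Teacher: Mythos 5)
Your proof is correct and follows essentially the same route as the paper: slice $\Omega$ along the $i$-th coordinate via Fubini, note that $u_{i,x}\in AC^1([a_{i,x},b_{i,x}])$ and that $v_{i,x}\in C^1_0([a_{i,x},b_{i,x}])$ because the segment endpoints lie on $\partial\Omega$, and apply the one-dimensional integration by parts formula of Lemma \ref{group-part} on each slice before reassembling. The only cosmetic difference is that the paper first extends all functions by zero to $\R^d$ before slicing, whereas you work directly over $\Omega$; your added attention to integrability of the fractional derivatives near the slice endpoints is a detail the paper passes over silently.
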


\begin{proof}
We first extend $u$, $v$, $\overline{{}^c \partial}^\alpha_i v$ and 
$\partial^\alpha_i u$ over $\R^d$ by associating the value $0$ 
if $x\in \R^d \setminus \overline{\Omega}$. In this case, we have
$$
\left .
\begin{array}{lll}
\di\int_{\Omega} u(x) \, \overline{{}^c \partial}^\alpha_i v(x) dx & = & \di\int_{\R^d} u(x) \, \overline{{}^c \partial}^\alpha_i v(x) dx \\
 & = & \di\int_{\R^{d-1}} \left [ \int_{x_i =-\infty}^{\infty} u(x) \, \overline{{}^c \partial}^\alpha_i v(x) dx_i \right ] dx_1 \dots dx_{i-1} dx_{i+1} \dots dx_d .
\end{array}
\right .
$$
As $\Omega$ is convex, there exists $a_{i,x}$ and $b_{i,x}$ such that 
$a_{i,x} \leq b_{i,x}$ and 
\begin{equation*}
\int_{x_i =-\infty}^{\infty} u(x) \, \overline{{}^c \partial}^\alpha_i v(x) dx_i 
=\int_{a_{i,x}}^{b_{i,x}} u_{i,x} (x_i ) {}^c\Dmv{\alpha} v_{i,x}(x_i).
\end{equation*}
As $u_{i,x} \in AC^1 ([a_{i,x},b_{i,x}])$ and 
$v_{i,x} \in C_0^1 ([a_{i,x},b_{i,x}])$, 
the integration by parts formula from lemma \ref{group-part} leads to:
\begin{eqnarray*}
\di\int_{a_{i,x}}^{b_{i,x}} u_{i,x} (x_i ) {}^c\Dmv{\alpha} v_{i,x} (x_i ) dx_i & = & \di\int_{a_{i,x}}^{b_{i,x}} {}^c\Dpv{\alpha} u_{i,x} (x_i ) v_{i,x} (x_i ) dx_i \\
 &= & \di\int_{a_{i,x}}^{b_{i,x}} \partial_i^{\alpha} u (x) v_{i,x} (x_i ) dx_i \\
 & = &  \di\int_{x_i=-\infty}^{+\infty} \partial_i^{\alpha} u (x) v(x) dx_i .
\end{eqnarray*}
As a consequence, we have 
\begin{eqnarray*}
\di\int_{\R^d} u(x) \, \overline{{}^c \partial}^\alpha_i v(x) dx_i & = & \di\int_{\R^{d-1}} \left [ \int_{x_i =-\infty}^{\infty} \partial_i^{\alpha} u(x)  v(x) dx_i \right ] dx_1 \dots dx_{i-1} dx_{i+1} \dots dx_d ,\\
 & = & \di\int_{\Omega} \partial_i^{\alpha} u(x) \, v(x) dx. 
\end{eqnarray*}
\end{proof}

\begin{lem}[Fractional Green-Riemann theorem]
\label{green_frac}
Let $\Omega \in \R^d$ be a smooth $d$-dimen\-sional bounded convex domain, $u \in C^1_0 (\overline{\Omega},\R)$ and $v=(v_1 ,\dots ,v_d ) \in AC^1 (\Omega)^d$, then we have
\begin{equation}
\int_\Omega v(x) \cdot \overline{{}^c \nabla}^\alpha u(x) \, dx = \int_\Omega \divg^\alpha (v(x)) \, u(x) \, dx .
\end{equation} 
\end{lem}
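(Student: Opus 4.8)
The plan is to recognize that the claimed identity is nothing more than the componentwise sum of Lemma~\ref{lemmeintermediaire}, with the roles of the two functions exchanged. First I would expand both sides along the canonical directions: by definition of the fractional gradient and of the dot product, the left-hand integrand is $v(x)\cdot\overline{{}^c\nabla}^\alpha u(x)=\sum_{i=1}^d v_i(x)\,\overline{{}^c\partial}^\alpha_i u(x)$, while by definition of the fractional divergence the right-hand integrand is $\divg^\alpha(v(x))\,u(x)=\sum_{i=1}^d \partial^\alpha_i v_i(x)\,u(x)$. Thus it suffices to establish, for each fixed index $i$, the scalar identity $\int_\Omega v_i(x)\,\overline{{}^c\partial}^\alpha_i u(x)\,dx=\int_\Omega \partial^\alpha_i v_i(x)\,u(x)\,dx$, and then to sum over $i$ by linearity of the integral.

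Next I would apply Lemma~\ref{lemmeintermediaire} to each such term. That lemma asserts $\int_\Omega w(x)\,\overline{{}^c\partial}^\alpha_i \phi(x)\,dx=\int_\Omega \partial^\alpha_i w(x)\,\phi(x)\,dx$ for $w\in AC^1(\overline\Omega)$ and $\phi\in C^1_0(\Omega)$. The matching is the crucial bookkeeping step: here it is the scalar function $u\in C^1_0(\overline\Omega)$ that carries the left Caputo derivative $\overline{{}^c\partial}^\alpha_i$, so $u$ must play the role of the $C^1_0$ test function $\phi$, whereas each component $v_i\in AC^1(\Omega)$ plays the role of the absolutely continuous function $w$. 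With this identification the scalar identity above is precisely the statement of Lemma~\ref{lemmeintermediaire}, and summing over $i=1,\dots,d$ reassembles the dot product on the left and the fractional divergence on the right, yielding the desired conclusion.

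There is no genuine analytic obstacle once Lemma~\ref{lemmeintermediaire} is in hand: the entire content lies in the one-dimensional integration by parts exchanging Caputo and Riemann-Liouville derivatives, already carried out in that lemma via the fractional integration by parts of Lemma~\ref{group-part}. The only point requiring care is to verify that the hypotheses transfer correctly under the role swap --- in particular that the regularity $v\in AC^1(\Omega)^d$ furnishes for each component $v_i$ the absolute-continuity assumption demanded of the non-test function, and that $u\in C^1_0(\overline\Omega)$ supplies the vanishing boundary values required of the test function. Modulo the usual identification of $\Omega$ with $\overline\Omega$ in these function-space labels, these conditions are met, and the proof reduces to the linear combination described above.
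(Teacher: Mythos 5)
Your proof is correct and follows exactly the paper's own argument: decompose the dot product and fractional divergence componentwise, apply Lemma~\ref{lemmeintermediaire} to each index $i$ with $v_i$ as the $AC^1$ function and $u$ as the $C^1_0$ test function, and sum by linearity. The careful bookkeeping of which function plays which role in Lemma~\ref{lemmeintermediaire} is precisely the content of the paper's proof as well.
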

\begin{proof}
The  proof results from lemma \ref{lemmeintermediaire}.
On the canonical basis the scalar product is given by
\begin{equation*}
\int_\Omega v(x) \cdot \overline{{}^c \nabla}^\alpha u(x) \, dx = \sum_{i=1}^d \int_\Omega v_i(x) \, \overline{{}^c \partial}^\alpha_i u(x) \, dx.
\end{equation*} 
Applying lemma \ref{lemmeintermediaire}, we have for any $1 \leq i \leq d$,
\begin{equation*}
\int_\Omega v_i(x) \, \overline{{}^c \partial}^\alpha_i u(x) \, dx = \int_\Omega  \partial^\alpha_i v_i(x) \, u(x) \, dx .  
\end{equation*} 
The definition of $\div^{\alpha}$ concludes the proof.
\end{proof}

\section{Asymmetric fractional calculus of variations}
\label{sect:asymetric}
The fractional Euler-Lagrange equations obtained so far in 
\cite{Riewe1, Agrawal, cr2, Frederico_Torres} 
involve both left and right fractional derivatives. This is a main drawback, if ones want to recover PDEs with order one derivative as composition of fractional derivatives of order 1/2. In this section, we give a simplified version of the asymmetric calculus of variations introduced in \cite{ci} which 
provides causal fractional Euler-Lagrange equations. We refer to \cite{ci} and \cite{inizan} for more details and in particular for weaker 
assumptions on the functional spaces.
\subsection{Asymmetric fractional Lagrangian}
Here the fractional derivatives are seen as partial fractional 
derivatives according to $t$ and $x$. 
With the notations $u_x$ and $u_t$ from section 
\ref{part1} \ref{subsection:functional} 
we denote by
\begin{equation*}
\Dcp u(t,x) := \Dcp u_x(t), \quad  
{}^c\nabla^{\alpha} u(t,x) := {}^c\nabla^{\alpha} u_t(x) \quad
\text{ and} \quad\nabla u(t,x) := \nabla u_t(x).
\end{equation*}
For a field $U=(u_+,u_-)$ we define
\begin{equation*}
\Da U(t,x) := \bigl (\Dcp u_+(t,x), - \Dcm u_-(t,x)\bigl )
\quad  
{}^c\nabla^{\alpha} U(t,x) := \bigl({}^c\nabla^{\alpha} u_+(t,x), - 
{}^c\bar{\nabla^{\alpha}} u_-(t,x)\bigl).
\end{equation*} 
For a generalised Lagrangian $L(t,x,y,v,w,z)$, 
we denote $\partial_y L$, $\partial_v L$, $\partial_w$ and $\partial_z$ 
the partial derivatives of $L$.
\begin{defi}
\label{defi:L}
Let be a Lagrangian $L$ defined as follows:
\begin{equation*}
\begin{array}{cccl}
L \, : & [a,b]\times \oo \times  \R \times \R \times \R^d \times \R^d & \longrightarrow & \; \R  \\
               &   (t,x,y,v,w,z)    & \longmapsto & L(t,x,y,v,w,z)
\end{array}
\end{equation*} 
The asymmetric representation of $L$ is denoted by $\tilde L$ and given by:
\begin{equation*}
\begin{array}{cccl}
\tilde L \, : & [a,b]\times \oo \times  \R^2 \times \R^2 \times \R^{2d} \times \R^{2d} & \longrightarrow & \; \R  \\
               &   (t,x,U,V,W,Z)    & \longmapsto & \tilde L(t,x,U,V,W,Z)
\end{array}
\end{equation*}
where 
\begin{equation*}
 \tilde L\bigl(t,x,(u_+,u_-),(v_+,v_-),(w_+,w_-),(z_+,z_-)\bigl)
:=L(t,x,u_++u_-,v_++v_-,w_++w_-,z_++z_- ).
\end{equation*}
\end{defi}
\noindent
The asymmetric fractional Lagrangian functional $\cal{L_{\alpha}}$ 
is defined by: 
\begin{defi}
Let $L$ be a Lagrangian as defined in definition \ref{defi:L} of 
class $C^1 ([a,b] \times \R^{3d+2} )$  
and $\tilde{L}$ its asymmetric representation. 
The associated asymmetric fractional Lagrangian 
functional of order $\alpha$ is given by
\begin{equation*} 
\begin{array}{cccl}
\mc{L}_\alpha \, : & C^1([a,b]\times \Omega  )^2 & 
\longrightarrow & \quad \R  \\
&   U   & \longmapsto & \displaystyle{\int_a^b \int_{\oo}} 
\tilde{L} \left( t,x, U(t,x), \Da U(t), {}^c\nabla^{\alpha} U(t,x),\nabla U(t,x) \right) \, dx\,dt.
\end{array}
\end{equation*}
\end{defi}
For convenience, we denote by $L_{\alpha}$ the 
functional defined for any
$U=(u_+,u_-) \in C^1([a,b]\times\Omega  )^2$ by
\begin{equation}
\label{eq:Lalpha}
\begin{array}{ccc}
L_\alpha(U)(t,x) & := &\tilde{L} \left( t,x, U(t,x), \Da U(t), {}^c\nabla^{\alpha} U(t,x),\nabla U(t,x) \right)\\
 & = & 
L(t,x, u_+(t,x) + u_-(t,x), \Dcp u_+(t,x) - \Dcm u_-(t,x), \\
& & {}^c \nabla^{\alpha} u_+(t,x) - {}^c \overline{\nabla}^{\alpha} u_-(t,x), \nabla u_+(t,x)+\nabla u_-(t,x)),
\end{array}
\end{equation} 
for any $x\in \Omega$ and $t\in [a,b]$ so that 
$$
\mc{L}_\alpha (U) =\di\int_a^b \di\int_{\Omega} L_{\alpha} (U) (t,x) dx \, dt .
$$
Let us note that as 
$U\in C^1 ([a,b]\times\Omega )$ we have 
$\Da_ U(t) $, ${}^c\nabla^{\alpha}_x U(t,x)$ and 
$\nabla_x U(t,x) \in C^0 ([a,b] \times \Omega )$. 
Using the fact that $L$ is $C^1 ([a,b]\times \R^{3d+2})$ we 
obtain that $L_{\alpha} (C^1 ( [a,b]\times\Omega ) 
\subset C^0 ([a,b]\times\Omega)$ and conclude that 
$\mc{L}_{\alpha}$ is well defined. 
\subsection{Asymmetric calculus of variations}
The next lemma explicits the differential of the functional $\mc{L}_\alpha$ defined on $U \in (C^1 ([a,b]\times \Omega ))^2$ in the direction $( C_0^1([a,b]\times\Omega ) )^2$

\begin{lem} 
\label{ext_frac}
Let $U \in (C^1 ([a,b]\times\Omega ))^2$. 
Let $\star^{\alpha} :=  \left(t,x, U(t,x), \Da U(t,x), {}^c\nabla^{\alpha} U(t,x),\nabla U(t,x) \right) .$
We assume that 
\begin{itemize}
\item $\forall \,x \in \Omega, \; t \mapsto \partial_{v} L (\star^{\alpha} ) \in AC^1([a,b])$,
\item $\forall \, t \in [a,b], \; x \mapsto  \partial_w L (\star^{\alpha} ) \in AC^1 (\bar\Omega )$,
\item $\forall \, t\in [a,b],\; x \mapsto \partial_z L (\star^{\alpha} ) \in  C^1(\Omega)$.
\end{itemize}
Then $\mc{L}_\alpha$ is $(C^1 ([a,b]\times \Omega ))^2$-differentiable 
at $U$ and in any direction 
$H=(h_+,h_-) \in (C_0^1 ([a,b] \times\Omega ))^2$,
the differential of $\mc{L}_\alpha$ is given by
$$
\left .
\begin{array}{lll}
\label{DL_alpha}
D\mc{L}_\alpha(U,H) & = & \di\int_a^b \int_{\Omega}
\left[ \partial_y L (\star^{\alpha} ) + \Dm \partial_v L (\star^{\alpha} )  
+ \overline{\mbox{\rm div}}^{\alpha} ( \partial_w {L} (\star^{\alpha} ))
-\div (\partial_z {L} (\star^{\alpha} ))
\right] \cdot h_+(t) \,dx \, dt  \\
 &  &   + \di\int_a^b  \int_{\Omega}
\left[ 
\partial_y L (\star^{\alpha} ) - \Dp \partial_v {L} (\star^{\alpha} )  
- \mbox{\rm div}^{\alpha} ( \partial_w {L} (\star^{\alpha}) )
-\div (\partial_z {L} (\star^{\alpha} ))
\right] \cdot h_-(t) \,dx \, dt .
\end{array}
\right .
$$
\end{lem}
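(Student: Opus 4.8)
The plan is to compute the G\^ateaux differential $D\mc{L}_\alpha(U,H)=\frac{d}{d\eps}\big|_{\eps=0}\mc{L}_\alpha(U+\eps H)$ directly and then integrate by parts so as to shift every differential operator off the increment $H=(h_+,h_-)$. First I would justify differentiating under the double integral. Since $L$ is $C^1$ and the operators $\Dcp$, $\Dcm$, ${}^c\nabla^{\alpha}$, ${}^c\overline{\nabla}^{\alpha}$, $\nabla$ are linear, the scalar map $\eps\mapsto L_\alpha(U+\eps H)(t,x)$ is differentiable; moreover, by Lemma \ref{regularity} the Caputo fractional derivatives of the $C^1$ functions $h_\pm$ are continuous, so the resulting integrand is continuous on the compact set $[a,b]\times\overline{\Omega}$ and hence integrable. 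This legitimizes interchanging $d/d\eps$ with $\int_a^b\int_\Omega$.

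Next, I would apply the chain rule to $\tilde L$ and exploit its additive structure $\tilde L(\dots,(\xi_+,\xi_-),\dots)=L(\dots,\xi_++\xi_-,\dots)$, so that each partial of $\tilde L$ in a $+$ or $-$ slot equals the corresponding partial of $L$ evaluated at $\star^{\alpha}$. Perturbing $u_+$ by $\eps h_+$ feeds $h_+$ into the $y$-slot, $\Dcp h_+$ into the $v$-slot, ${}^c\nabla^{\alpha} h_+$ into the $w$-slot and $\nabla h_+$ into the $z$-slot, whereas perturbing $u_-$ by $\eps h_-$ feeds $h_-$, $-\Dcm h_-$, $-{}^c\overline{\nabla}^{\alpha} h_-$ and $\nabla h_-$. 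This presents $D\mc{L}_\alpha(U,H)$ as the integral of $\partial_y L\,(h_++h_-)+\partial_v L\,(\Dcp h_+-\Dcm h_-)+\partial_w L\cdot({}^c\nabla^{\alpha} h_+-{}^c\overline{\nabla}^{\alpha} h_-)+\partial_z L\cdot(\nabla h_++\nabla h_-)$, all evaluated at $\star^{\alpha}$, with the derivatives still acting on $h_\pm$.

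The core of the argument is to integrate by parts each term carrying a derivative of $h_\pm$. For the time terms I would fix $x$ and use the fractional integration by parts of Lemma \ref{group-part}(2): since $h_\pm\in C^1_0\subset AC^1$ and, by hypothesis, $t\mapsto\partial_v L(\star^{\alpha})\in AC^1([a,b])$, reading that identity in reverse (and its left/right mirror) converts $\int_a^b\partial_v L\,\Dcp h_+\,dt$ into $\int_a^b(\Dm\partial_v L)\,h_+\,dt$ and $\int_a^b\partial_v L\,\Dcm h_-\,dt$ into $\int_a^b(\Dp\partial_v L)\,h_-\,dt$, with no boundary contributions because $h_\pm$ and their derivatives vanish at $a,b$. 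For the fractional spatial terms I would fix $t$ and invoke the fractional Green--Riemann theorem (Lemma \ref{green_frac}) together with its mirror: using $\partial_w L(\star^{\alpha})\in AC^1(\overline{\Omega})^d$ and $h_\pm\in C^1_0(\Omega)$, the ${}^c\nabla^{\alpha} h_+$ term becomes $\int_\Omega\overline{\divg}^{\alpha}(\partial_w L)\,h_+\,dx$ and the ${}^c\overline{\nabla}^{\alpha} h_-$ term becomes $\int_\Omega\divg^{\alpha}(\partial_w L)\,h_-\,dx$. Finally, the classical divergence theorem, applicable since $\partial_z L(\star^{\alpha})\in C^1(\Omega)$ and $h_\pm$ vanish on $\partial\Omega$, turns $\int_\Omega\partial_z L\cdot\nabla h_\pm\,dx$ into $-\int_\Omega\div(\partial_z L)\,h_\pm\,dx$.

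Collecting the coefficients of $h_+$ and of $h_-$ separately then reproduces exactly the two bracketed expressions of the statement. I expect the main obstacle to be bookkeeping rather than conceptual: keeping the four conventions (left versus right, Caputo versus Riemann--Liouville) and all signs consistent as each derivative is moved from $h_\pm$ onto the partial of $L$, and checking that the regularity hypotheses imposed on $\partial_v L$, $\partial_w L$ and $\partial_z L$ are precisely those needed to apply Lemmas \ref{group-part} and \ref{green_frac} and the classical Green formula on each slice. A minor additional point is the appeal to Fubini to slice the double integral in $t$ for the time term and in $x$ for the spatial terms, which is justified by the continuity of all integrands on $[a,b]\times\overline{\Omega}$.
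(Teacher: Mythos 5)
Your proposal is correct and follows essentially the same route as the paper's proof: expand to first order (the paper's ``Taylor expansion'' is your chain-rule/G\^ateaux computation), then shift every operator off $h_\pm$ via the fractional integration by parts of Lemma \ref{group-part}, the fractional Green--Riemann formula of Lemma \ref{green_frac} (and its left/right mirror), and the classical Green formula, finally collecting the coefficients of $h_+$ and $h_-$. The additional care you take in justifying differentiation under the integral sign and the Fubini slicing is left implicit in the paper but does not alter the argument.
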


\begin{proof}
Using a Taylor expansion of the Lagrangian $L$, we obtain:
\begin{equation*}
\begin{array}{lll}
D\mc{L}_\alpha(U,H) & = & \di\int_a^b \di\int_{\Omega}
\left[
\partial_y L(\star) \cdot (h_+ + h_-) 
+\partial_v L(\star) \cdot (\Dcp h_+ -\Dcm h_- ) 
\right . \\
 & &  
\left . 
+\partial_w L(\star) \cdot \left ( {}^c\nabla^{\alpha} h_+ - {}^c\overline{\nabla^{\alpha}} h_- \right ) 
+\partial_z L(\star) \cdot (\nabla h_+ +\nabla h_+ )  
\right] 
\,dx \, dt.
\end{array}
\end{equation*}
As $\partial_v L(\star) \in AC^1 ([a,b])$ by assumption, 
using the integration by parts formula of lemma \ref{group-part}
with $h_+$ (resp. $h_-$) in $C_0^1 ([a,b])$ leads to
$$
\di\int_a^b \di\int_{\Omega}
\partial_v L(\star) \cdot (\Dcp h_+ -\Dcm h_- ) 
\, dx\, dt
=
\di\int_a^b \di\int_{\Omega}
\left ( \Dm \partial_v L(\star ) \cdot h_+ 
- \Dp \partial_v L (\star ) \cdot h_- \right )  \, dx\, dt .
$$
As $\partial_w L (\star ) \in AC^1 (\overline{\Omega} )$ 
and $h_+$ (resp. $h_-$) is in $C_0^1 ([a,b] \times \Omega )$, we can apply 
the fractional Green-Riemann formula from lemma \ref{green_frac} to obtain 
$$
\di\int_a^b \di\int_{\Omega}
\partial_w L(\star) \cdot \left ( {}^c\nabla^{\alpha} h_+ - {}^c\overline{\nabla^{\alpha}} h_- \right )
 dx\, dt
=
\di\int_a^b \di\int_{\Omega}
\left ( 
\overline{\mbox{\rm div}}^{\alpha} (\partial_w L(\star ) ) \cdot h_+ - \mbox{\rm div}^{\alpha} (\partial_w L (\star )) \cdot h_- 
\right )
 dx \, dt .
$$
The last part of the formula comes from the usual Green-Riemann theorem. This completes the proof.
\end{proof}
A consequence of the previous lemma is the following characterisation of the extremals of $\mc{L}_\alpha$ for asymmetric fractional functional as solutions of two fractional Euler-Lagrange equations:
\begin{thm}
\label{maintheorem1}
Let $U \in ( C^1( [a,b]\times\Omega ) )^2$. \\
Let $\star^{\alpha} :=  \left(t,x,U(t,x), \Da U(t,x), {}^c\nabla^{\alpha} U(t,x),\nabla U(t,x) \right)$.
We assume that 
\begin{itemize}
\item $\forall \,x \in \Omega, \; t \mapsto \partial_{v} L (\star^{\alpha} ) \in AC^1([a,b])$,
\item $\forall \, t \in [a,b], \; x \mapsto  \partial_w L (\star^{\alpha} ) \in AC^1 (\bar\Omega )$,
\item $\forall \, t\in [a,b],\; x \mapsto \partial_z L (\star^{\alpha} ) \in  C^1(\Omega)$.
\end{itemize}
Then $(C_0^1 ([a,b] \times \Omega ))^2$ extremals of $\mc{L}_\alpha$ 
correspond to solutions of the following set of 
fractional Euler-Lagrange equations:
$$
\left .
\begin{array}{lll}
\partial_u L (\star^{\alpha} ) 
+ \Dm \bigl( \partial_v L (\star^{\alpha} )  \bigl)
+ \overline{\mbox{\rm div}}^{\alpha} \bigl( \partial_w {L} (\star^{\alpha} )\bigl)
-\div \bigl(\partial_z {L} (\star^{\alpha} )\bigl) & = & 0,  \\
\partial_u L (\star^{\alpha} ) 
- \Dp \bigl( \partial_v {L} (\star^{\alpha} ) \bigl) 
- \mbox{\rm div}^{\alpha} \bigl( \partial_w {L} (\star^{\alpha} )\bigl)
-\div \bigl(\partial_z {L} (\star^{\alpha} )\bigl) & = & 0 .
\end{array}
\right .
$$
\end{thm}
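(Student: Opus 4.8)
The plan is to obtain the two fractional Euler--Lagrange equations directly from the expression for the differential established in Lemma \ref{ext_frac}, combining the definition of an extremal with the fundamental lemma of the calculus of variations. The whole analytic work---the integration by parts in time (Lemma \ref{group-part}) and the fractional Green--Riemann formula (Lemma \ref{green_frac}) that transfer all derivatives off the variation $H$---has already been carried out inside Lemma \ref{ext_frac}, so at the level of the theorem no further manipulation of the fractional operators is required.

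First I would recall that, by definition, $U \in (C^1([a,b]\times\Omega))^2$ is a $(C_0^1([a,b]\times\Omega))^2$-extremal of $\mc{L}_\alpha$ exactly when $D\mc{L}_\alpha(U,H)=0$ for every admissible direction $H=(h_+,h_-)\in (C_0^1([a,b]\times\Omega))^2$. Since the three regularity hypotheses on $\partial_v L(\star^{\alpha})$, $\partial_w L(\star^{\alpha})$ and $\partial_z L(\star^{\alpha})$ are identical to those of Lemma \ref{ext_frac}, I may substitute the explicit two-term formula given there for $D\mc{L}_\alpha(U,H)$.

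The next step is a decoupling argument exploiting that the two components $h_+$ and $h_-$ of the direction $H$ are independent. Choosing $H=(h_+,0)$ with $h_+$ arbitrary in $C_0^1([a,b]\times\Omega)$ forces the first space-time integral in Lemma \ref{ext_frac} to vanish for every such $h_+$; symmetrically, $H=(0,h_-)$ forces the second to vanish for every $h_-$. Hence the single condition $D\mc{L}_\alpha(U,H)=0$ for all $H$ splits into the two scalar conditions, the first being
\[
\int_a^b\int_\Omega \Big[\,\partial_y L(\star^{\alpha}) + \Dm \partial_v L(\star^{\alpha}) + \overline{\mbox{\rm div}}^{\alpha}\big(\partial_w L(\star^{\alpha})\big) - \div\big(\partial_z L(\star^{\alpha})\big)\,\Big]\, h_+ \, dx\, dt = 0
\]
for all $h_+\in C_0^1([a,b]\times\Omega)$, and the second the analogous identity with the companion bracket of Lemma \ref{ext_frac} tested against $h_-$.

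Finally I would invoke the fundamental lemma of the calculus of variations in the space-time domain. The test functions range over $C_0^1([a,b]\times\Omega)$, i.e.\ functions vanishing at the time endpoints $t=a,b$ and on $\partial\Omega$, a class rich enough that, once each bracketed integrand is known to be locally integrable on $(a,b)\times\Omega$, the vanishing of the integral against all such $h_\pm$ forces the integrand to vanish almost everywhere; this produces precisely the two stated equations, where $\partial_u L$ is the same as the slot $\partial_y L$ of the lemma. The main obstacle, and the only genuine point to check, is this integrability (and, where one wants the equations pointwise rather than almost everywhere, continuity) of the integrands: one must confirm that the fractional contributions $\Dm \partial_v L(\star^{\alpha})$, $\overline{\mbox{\rm div}}^{\alpha}(\partial_w L(\star^{\alpha}))$ and $\mbox{\rm div}^{\alpha}(\partial_w L(\star^{\alpha}))$ inherit the required regularity from the $AC^1$ hypotheses, which follows by applying the regularity statements of Lemma \ref{regularity} slicewise in $t$ and in each spatial variable.
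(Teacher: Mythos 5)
Your proposal is correct and follows exactly the route the paper intends: the paper gives no written proof, stating only that the theorem is ``a consequence of'' Lemma \ref{ext_frac}, and your argument (extremal $\Leftrightarrow$ vanishing differential, decoupling via the independent directions $(h_+,0)$ and $(0,h_-)$, then the fundamental lemma of the calculus of variations) is precisely the standard completion of that step. Your closing remark about checking integrability/continuity of the bracketed integrands is a point the paper silently omits, so flagging it is a useful addition rather than a deviation.
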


\subsection{Specialisation}
This theorem is not applicable as it is for classical PDEs since
as it provides a system of PDEs. However, by restricting our 
attention to extremals over $C^1 ([a,b]\times\Omega  ) \times \{ 0\}$ 
over variations in $\{ 0\} \times C_0^1 ([a,b]\times \Omega  )$, 
we obtain a more interesting version: 
\begin{thm} 
\label{theorem:EL_coh}
Let $u_+ \in C^1 ([a,b]\times\Omega)$. \\
Let 
$\star^{\alpha}_+  := \bigl( t, x, u_+(t,x), \Dcp u_+(t,x), 
{}^c\nabla u_+(t,x),\nabla u_+(t,x) \bigl).$
We assume that
\begin{itemize}
\item $\forall \,x \in \Omega, \; t \mapsto \partial_v L (\star^{\alpha}_+ ) \in AC^1 ([a,b])$,
\item $\forall \, t \in [a,b], \; x \mapsto \partial_w L (\star^{\alpha}_+ ) \in AC^1 (\bar \Omega )$,
\item $\forall \, t\in [a,b],\; x \mapsto \partial_z L (\star^{\alpha}_+ ) \in  C^1(\Omega)$.
\end{itemize}
Then $(u_+,0)$ is a $\{ 0 \} \times C_0^1 ([a,b]\times\Omega )$-extremal of the action $\mc{L}_\alpha$ if and only if $u_+$ satisfies 
\begin{eqnarray*} \label{eq:EL_coh}
 \partial_y L ( \star^{\alpha}_+ ) 
- \Dp \bigl(\partial_{v} L ( \star^{\alpha}_+ ) \bigl)
- \div^{\alpha} \bigl(\partial_w L ( \star^{\alpha}_+ ) \bigl)
-\div \bigl(\partial_z {L} ( \star^{\alpha}_+ ) \bigl)= 0,
\end{eqnarray*}
for any $x \in \Omega$, $t \in [a,b]$.
\end{thm}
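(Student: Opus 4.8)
The plan is to read off Theorem~\ref{theorem:EL_coh} from Lemma~\ref{ext_frac} by specializing the field to $U=(u_+,0)$ and the admissible variations to $H=(0,h_-)$, and then to invoke the fundamental lemma of the calculus of variations.

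First I would note that $(u_+,0)\in (C^1([a,b]\times\Omega))^2$, so $\mc{L}_\alpha$ is defined there, and I would compute the evaluation point $\star^\alpha$ at this field. Since the asymmetric representation $\tilde L$ of Definition~\ref{defi:L} sums the ``$+$'' and ``$-$'' contributions in each slot and here $u_-\equiv 0$, the quantities $\Dcm u_-$, ${}^c\overline{\nabla}^\alpha u_-$ and $\nabla u_-$ all vanish. Consequently $\star^\alpha$ evaluated at $(u_+,0)$ collapses exactly to $\star^\alpha_+=(t,x,u_+,\Dcp u_+,{}^c\nabla^\alpha u_+,\nabla u_+)$. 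In particular the three regularity hypotheses stated in the theorem, being imposed on $\partial_v L$, $\partial_w L$, $\partial_z L$ at $\star^\alpha_+$, are exactly the hypotheses required by Lemma~\ref{ext_frac} at $U=(u_+,0)$, so that lemma is applicable.

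Next I would restrict to $H=(0,h_-)$ with $h_-\in C_0^1([a,b]\times\Omega)$. In the two-line expression for $D\mc{L}_\alpha(U,H)$ the first integral carries the factor $h_+=0$ and therefore drops out, leaving
$$
D\mc{L}_\alpha\bigl((u_+,0),(0,h_-)\bigr)=\int_a^b\int_\Omega \Bigl[\partial_y L-\Dp\,\partial_v L-\div^{\alpha}(\partial_w L)-\div(\partial_z L)\Bigr](\star^\alpha_+)\cdot h_-\,dx\,dt .
$$
The ``if'' implication is then immediate: if $u_+$ satisfies the displayed Euler--Lagrange equation the bracket vanishes pointwise, hence the differential is zero in every admissible direction and $(u_+,0)$ is a $\{0\}\times C_0^1([a,b]\times\Omega)$-extremal.

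For the converse I would use the fundamental lemma: extremality forces the integral above to vanish for all $h_-\in C_0^1([a,b]\times\Omega)$, and because Lemma~\ref{ext_frac} has already absorbed every integration by parts, no derivative falls on $h_-$ and a du~Bois--Reymond step is unnecessary. I expect the one genuine obstacle to be the regularity of the Euler--Lagrange bracket needed to pass from ``the integral vanishes against all test functions'' to ``the bracket vanishes pointwise.'' The terms $\partial_y L(\star^\alpha_+)$ and $\div(\partial_z L(\star^\alpha_+))$ are continuous, using that $u_+\in C^1$ and that its Caputo derivatives are continuous by Lemma~\ref{regularity}; the two fractional terms are only guaranteed to be locally integrable, since by Lemma~\ref{caputo-RL} the Riemann--Liouville operator $\Dp$ introduces a factor $(t-a)^{-\alpha}$ near the endpoint $t=a$. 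As this singularity is integrable for $\alpha<1$, the bracket lies in $L^1$ and the fundamental lemma yields its vanishing, which is precisely the asserted equation.
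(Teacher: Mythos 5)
Your proof is correct and takes exactly the route the paper does: the paper's entire proof of Theorem~\ref{theorem:EL_coh} is the single sentence that it follows from Lemma~\ref{ext_frac}, which is precisely your specialization $U=(u_+,0)$, $H=(0,h_-)$ (killing the $h_+$-integral and collapsing $\star^{\alpha}$ to $\star^{\alpha}_+$) followed by the fundamental lemma of the calculus of variations. Your extra care about the integrability of the Riemann--Liouville term $\Dp \partial_v L(\star^{\alpha}_+)$ near $t=a$, needed to apply the fundamental lemma, is a detail the paper leaves implicit.
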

The proof is a consequence of lemma \ref{ext_frac}.
\begin{rema}
In the previous part, we have heuristically justified the introduction of a doubled phase space by saying that irreversibility induces a natural arrow of time. Our idea to focus only on curves in $C^1 ([a,b]\times\Omega ) \times \{ 0\}$ is precisely to say that we are interested in one direction of time (here the future). However, the selection of a transverse set for variations to the underlying phase space is not so clear. It means heuristically that the future depends mostly on the virtual variations in the past. More work are needed in this direction.
\end{rema} 

\part{Application to the convection-diffusion equation}
\label{part4}
Let us consider the reaction-convection-diffusion equation 
defined on $[a,b]\times \Omega$ by (\ref{CDunst}):
\begin{equation} 
\label{eq:conv_diff}
\dpar{}{t} u(t,x) +\gamma\cdot \nabla u(t,x) - 
\divg \left( K \cdot \nabla u(t,x) \right) +\beta u(t,x)  =f(t,x).
\end{equation}
with constant coefficients $\gamma \in \R^d$, $K\in\R^{d\times d}$  
and $\beta \in \R$.
As we already mentioned, this equation does not derive from a 
variational principle in the classical sense. Nevertheless
the result of the previous section allows us to overcome this difficulty
and obtain a variational formulation
of the convection-diffusion equation 
by mean of the asymmetric fractional Lagrangian.
Let us defined the extended Lagrangian $L$ given by
\begin{equation*}
\begin{array}{cc cl}
L : & [a,b]\times \oo \times  \R \times \R \times \R^d \times \R^d & \longrightarrow &  \R  \\
     &   (t,x,y,v,w,z)    & \longmapsto & f(t,x)y - \dfrac{1}{2}\beta y^2
+ \dfrac{1}{2} v^2 + \dfrac{1}{2} (\gamma  \times w)   \cdot w - 
\dfrac{1}{2} (K\cdot z) \cdot z .
\end{array}
\end{equation*} 
The direct application of theorem \ref{theorem:EL_coh}  provides that the solutions of the convection-diffusion equation are 
$\{ 0 \} \times C_0^1 ([a,b]\times\Omega )$-extremals of the following
asymmetric fractional functional $\cal{L}_{1/2}$ defined 
for $U=(u_+,u_-)$ by
\begin{equation*}
\begin{array}{cc}
\cal{L}_{1/2}(U) = & \displaystyle\int_a^b \int_{\oo} L \bigl( t,x,u_+( t,x) + u_-( t,x), 
{}^c\Dpv{1/2} u_+( t,x) - {}^c\Dmv{1/2} u_-( t,x), \\
& \qquad \, {}^c \nabla^{1/2} u_+ ( t,x)- \overline{{}^c \nabla}^{1/2} u_-( t,x), 
\, \nabla u_+ ( t,x)+ \nabla u_- ( t,x)\bigl)\,dx\,dt.
\end{array}
\end{equation*}
Namely, the following result holds:
\begin{thm}
Let $u \in F^2 (\bar{\Omega}\times [a,b])$. 
Then $u$ is a solution of 
the convection-diffusion equation \eqref{eq:conv_diff} 
if and only if $(u,0)$ is a $\{0\} \times C_0^1 ([a,b]\times\Omega)$ 
critical point of $\mathcal{L}_{1/2}$.
\end{thm}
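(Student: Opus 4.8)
The plan is to apply Theorem~\ref{theorem:EL_coh} with $\alpha=1/2$ to the Lagrangian $L$ defined above and then to recognise the resulting fractional Euler--Lagrange equation as \eqref{eq:conv_diff}. Since Theorem~\ref{theorem:EL_coh} is itself an equivalence between $\{0\}\times C_0^1$-extremality of $\mathcal{L}_{1/2}$ and the Euler--Lagrange equation, both implications of the present statement are obtained at once, provided the three regularity hypotheses of that theorem are verified for $u\in F^2(\bar{\Omega}\times[a,b])$.

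First I would compute the partial derivatives of $L$. Writing $(\gamma\times w)\cdot w=\sum_{i}\gamma_i w_i^2$ and using that $K$ is symmetric, one obtains
$$\partial_y L=f-\beta y,\quad \partial_v L=v,\quad \partial_w L=\gamma\times w,\quad \partial_z L=-K\cdot z.$$
Evaluating these at $\star^{1/2}_+=\bigl(t,x,u,\Dcpv{1/2}u,{}^c\nabla^{1/2}u,\nabla u\bigr)$ gives $\partial_y L=f-\beta u$, $\partial_v L=\Dcpv{1/2}u$, $\partial_w L=\gamma\times{}^c\nabla^{1/2}u$ and $\partial_z L=-K\cdot\nabla u$.

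Next I would insert these into the Euler--Lagrange equation of Theorem~\ref{theorem:EL_coh} and collapse each term using the lemmas of Part~\ref{part3}. The time term becomes $\Dpv{1/2}\bigl(\partial_v L\bigr)=\Dpv{1/2}\circ\Dcpv{1/2}u=\partial_t u$ by the composition rule of Lemma~\ref{group-part}; the fractional divergence term becomes $\divg^{1/2}\bigl(\partial_w L\bigr)=\divg^{1/2}(\gamma\times{}^c\nabla^{1/2}u)=\gamma\cdot\nabla u$ by Lemma~\ref{lem:div-grad}; the classical term is $\div(\partial_z L)=-\div(K\cdot\nabla u)$; and $\partial_y L=f-\beta u$. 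The equation of Theorem~\ref{theorem:EL_coh} therefore reduces to
$$(f-\beta u)-\partial_t u-\gamma\cdot\nabla u+\div(K\cdot\nabla u)=0,$$
which, after a change of sign and rearrangement, is exactly \eqref{eq:conv_diff}.

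The main obstacle is the verification of the three regularity hypotheses of Theorem~\ref{theorem:EL_coh} for $u\in F^2(\bar{\Omega}\times[a,b])$. The spatial conditions are the easier ones: since $u_t\in C^2(\bar{\Omega})$ for every $t$, the map $x\mapsto -K\cdot\nabla u$ lies in $C^1(\Omega)$, while $x\mapsto\gamma\times{}^c\nabla^{1/2}u$ is treated component by component through Lemma~\ref{regularity} applied to the line restrictions $u_{i,x}$. The delicate point is the time regularity $t\mapsto\Dcpv{1/2}u\in AC^1([a,b])$: here one only knows $u_x\in AC^2([a,b])$, so Lemma~\ref{regularity}, stated for $C^1$-type spaces, must be supplemented by the corresponding absolute-continuity statement for the fractional integral of the $AC^1$ function $du_x/dt$, namely $\Dcpv{1/2}u_x=\Ipv{1/2}(du_x/dt)$. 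This same $AC^2$-regularity in time is precisely what legitimises the composition rule $\Dpv{1/2}\circ\Dcpv{1/2}=d/dt$ of Lemma~\ref{group-part}; securing these two absolute-continuity facts is where the real work lies, the remaining identifications being formal.
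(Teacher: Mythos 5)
Your proposal is correct and follows essentially the same route as the paper's own proof: compute the four partial derivatives of $L$, check the three regularity hypotheses of Theorem \ref{theorem:EL_coh} from $u_x \in AC^2([a,b])$ and $u_t \in C^2(\bar{\Omega})$, and collapse the Euler--Lagrange equation via the composition rule of Lemma \ref{group-part} and the identity of Lemma \ref{lem:div-grad}. Your remark that the time-regularity claim $\Dcpv{1/2} u \in AC^1([a,b])$ needs an absolute-continuity version of Lemma \ref{regularity} is a fair observation about a point the paper asserts without detail, but it is a refinement of the same argument, not a different one.
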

\begin{proof}
Let $x \in \Omega$ and $t \in [a,b]$.  Let $t \in [a,b]$. 
The partial derivatives of $L$ verify:
\begin{itemize}
\item $\partial_y L(t,x,u(t,x),\Dcpv{1/2} x(t), {}^c \nabla^{1/2} u(t,x),  \nabla u(t,x)) = f(t,x) - \beta u(t,x)$,
\item $\partial_v L(t,x,u(t,x),\Dcpv{1/2} x(t), {}^c \nabla^{1/2} u(t,x),  \nabla u(t,x)) =  \Dcpv{1/2} u(t,x)$,
\item $\partial_w L(t,x,u(t,x),\Dcpv{1/2} x(t), {}^c \nabla^{1/2} u(t,x),  \nabla u(t,x)) =  \gamma \times {}^c \nabla^{1/2} u(t,x)$,
\item $\partial_z L(t,x,u(t,x),\Dcpv{1/2} x(t), {}^c \nabla^{1/2} u(t,x),  \nabla u(t,x)) = -K\cdot\nabla u(t,x)$.
\end{itemize}
As $u\in F^2 ([a,b]\times\bar\Omega )$ we have that $u_x \in AC^2 ([a,b] )$ and as a consequence $\Dcpv{1/2} u \in AC^1 ([a,b]) $. We have also $u_t \in C^2 (\bar{\Omega} )$ so that using lemma \ref{regularity}, we deduce ${}^c \nabla^{1/2} u \in C^1 (\Omega)$. Moreover $\nabla u \in C^1 ([a,b])$ so that the conditions of theorem \ref{theorem:EL_coh} are fulfilled.
From lemma \ref{group-part}, as $u_x \in AC^2 ([a,b])$ we have 
$\Dpv{1/2} \circ \Dcpv{1/2} u = \dfrac{d}{dt} u$.
Moreover, as $u_t \in C^2 (\bar\Omega )$, lemma \ref{lem:div-grad} 
applies and we have
\begin{equation*}
\divg^{1/2} \left( \gamma \times {}^c \nabla^{1/2} u(x) \right) = \gamma \cdot \nabla u(x).
\end{equation*} 
This concludes the proof.
\end{proof}
\part{Conclusion and perspectives}
The previous result is only an example of PDEs for which the fractional asymmetric calculus of variation provides a Lagrangian variational formulation when this is not possible using the classical calculus of variations. As we previously said in the introduction, variational formulations of PDEs are important both from the theoretical and practical point of view. In that respect our result is far from being complete. 

We have the following list of open problems and perspectives:

\begin{itemize}
\item One must develop the critical point theory associated to our fractional functionals in order to provide results about existence and regularity of solutions for these PDEs. 

\item Our paper as well as \cite{ci} solve the inverse problem of the fractional calculus of variations for some classical or fractional PDEs (classical or fractional diffusion equation, fractional wave equation, convection-diffusion
$\dots$). However, we have no characterization of PDEs admitting a fractional variational formulation in our setting. In the classical case, the Lie approach to ODEs or PDEs as exposed for example in \cite{Olver} provides a necessary criterion known as Helmholtz's conditions. 
A natural idea is to look for the corresponding theory in our case.

\item There exists suitable numerical algorithms to study classical Lagrangian systems called \textit{variational integrators} which are developed for example in \cite{Lubich}, \cite{mars}. The basic idea of a variational integrator is to preserve this variational structure at the discrete level. A natural extension of our work is then to develop variational integrators adapted to our fractional Lagrangian functionals. A first step in this direction has been done in \cite{cbgi} by introducing the notion of discrete embedding of Lagrangian systems. However, this work does not cover continuous fractional Lagrangian systems and uses only classical discretization of the Riemann-Liouville or Caputo derivative by  Grünwald-Leitnikov expansions. However for classical functionals we have extended this point of view to finite-elements and finite-volumes methods, \cite{cgp}. We will discuss the case of continuous fractional Lagrangian systems in a forthcoming paper.
\end{itemize}
Of course of all these problems are far from being solved for the moment. However, it proves that fractional calculus can be useful in a number of classical problems of Analysis and in particular for PDEs where classical methods do not provide efficient tools.

\bibliographystyle{plain}

\begin{thebibliography}{}

\end{thebibliography}


\begin{thebibliography}{10}
\bibitem{Agrawal}
Om~Prakash Agrawal.
\newblock Formulation of {E}uler-{L}agrange equations for fractional
  variational problems.
\newblock {\em J. Math. Anal. Appl.}, 272(1):368--379, 2002.

\bibitem{Ba}
Harry Bateman.
\newblock On dissipative systems and related variational principles.
\newblock {\em Physical Review}, 38(1):815--819, 1931.

\bibitem{Bauer}
P.~S. Bauer.
\newblock Dissipative dynamical systems.
\newblock {\em Proc. Nat. Acad. Sci.}, 17:311--314, 1931.

\bibitem{cbgi}
Loic Bourdin, Jacky Cresson, Isabelle Greff, Pierre Inizan, 
\newblock Variational integrators for fractional Lagrangian systems in the framework of discrete embeddings, 
\newblock Preprint 2011.

\bibitem{cr2}
Jacky Cresson.
\newblock Fractional embedding of differential operators and {L}agrangian
  systems.
\newblock {\em J. Math. Phys.}, 48(3):033504, 34, 2007.

\bibitem{cgp}
Jacky Cresson, Isabelle Greff, and Charles Pierre.
\newblock Coherent discrete embeddings for Lagrangian and Hamiltonian systems.
\newblock {\em Arxiv-1107.0894}, pages 1--24, 2011.

\bibitem{ci}
Jacky Cresson and Pierre Inizan.
\newblock Variational formulations of differential equations and asymmetric
  fractional embedding.
\newblock {\em  Journal of Mathematical Analysis and Applications}, Volume 385, Issue 2, 15 January 2012, Pages 975-997.

\bibitem{Frederico_Torres}
Frederico, Gast{\~a}o S. F. and Torres, Delfim F. M., A formulation of {N}oether's theorem for fractional problems
              of the calculus of variations, J. Math. Anal. Appl.,
334, {2007},{2}, {834--846}.

\bibitem{gtd}
Sk. Golam~Ali, Benoy Talukdar, and Umapada Das.
\newblock Inverse problem of variational calculus for nonlinear evolution
  equations.
\newblock {\em Acta Phys. Polon. B}, 38(6):1993--2002, 2007.

\bibitem{Lubich}
Ernst Hairer, Christian Lubich, and Gerhard Wanner.
\newblock {\em Geometric numerical integration}, volume~31 of {\em Springer
  Series in Computational Mathematics}.
\newblock Springer-Verlag, Berlin, second edition, 2006.
\newblock Structure-preserving algorithms for ordinary differential equations.

\bibitem{inizan}
Pierre Inizan.
\newblock Dynamique fractionnaire pour le chaos hamiltonien.
\newblock PhD thesis.
\newblock 2010.

\bibitem{Klimek}
Malgorzata Klimek.
\newblock On analogues of exponential functions for antisymmetric fractional
  derivatives.
\newblock {\em Comput. Math. Appl.}, 59(5):1709--1717, 2010.

\bibitem{mars}
Jerrold~E. Marsden and Matthew West.
\newblock Discrete mechanics and variational integrators.
\newblock {\em Acta Numer.}, 10:357--514, 2001.

\bibitem{Olver}
Peter~J. Olver.
\newblock {\em Applications of {L}ie groups to differential equations}, volume
  107 of {\em Graduate Texts in Mathematics}.
\newblock Springer-Verlag, New York, second edition, 1993.

\bibitem{Ortiz}
Michael Ortiz.
\newblock A variational formulation for convection-diffusion problems.
\newblock {\em Internat. J. Engrg. Sci.}, 23(7):717--731, 1985.

\bibitem{Riewe1}
Fred Riewe.
\newblock Nonconservative {L}agrangian and {H}amiltonian mechanics.
\newblock {\em Phys. Rev. E (3)}, 53(2):1890--1899, 1996.

\bibitem{Riewe2}
Fred Riewe.
\newblock Mechanics with fractional derivatives.
\newblock {\em Phys. Rev. E (3)}, 55(3, part B):3581--3592, 1997.

\bibitem{Samko}
Stefan~G. Samko, Anatoly~A. Kilbas, and Oleg~I. Marichev.
\newblock {\em Fractional integrals and derivatives}.
\newblock Gordon and Breach Science Publishers, Yverdon, 1993.
\newblock Theory and applications, Edited and with a foreword by S. M.
  Nikol{\cprime}ski{\u\i}, Translated from the 1987 Russian original, Revised
  by the authors.

\bibitem{Vainberg}
M.~M. Vainberg.
\newblock {\em Variational methods for the study of nonlinear operators}.
\newblock Holden-Day Inc., San Francisco, Calif., 1964.
\newblock With a chapter on Newton's method by L. V. Kantorovich and G. P.
  Akilov. Translated and supplemented by Amiel Feinstein.

\end{thebibliography}
\def\cprime{$'$}

\end{document}